\def\Id{\mathbf{I}\mathbf{d}}
\def\proj{\mathbf{P}{\mathbf{r}}\mathbf{o}\mathbf{j}}
\def\beq{\begin{equation}}
\def\eeq{\end{equation}}
\def\baq{\begin{eqnarray}}
\def\eaq{\end{eqnarray}}
\def\baqn{\begin{eqnarray*}}
\def\eaqn{\end{eqnarray*}}
\newcommand{\ball}{\mathbb{B}}
\theoremstyle{plain}
\newtheorem{definition}{Definition}
\newtheorem{remark}{Remark}
\newtheorem{example}{Example}
\newtheorem{theorem}{Theorem}
\newtheorem{lemma}[theorem]{Lemma}
\newtheorem{proposition}[theorem]{Proposition}
\newcommand{\R}{{\mathbb R}}
\newcommand{\N}{{\mathbb N}}
\newcommand{\interior}{{\rm int}\kern 0.06em}
\def\<{\langle}
\def\>{\rangle}
\newcommand {\graph} {{\rm Graph\,}\,}
\definecolor{myblue}{rgb}{.8, .8, 1}
\newcommand*\mybluebox[1]{
\colorbox{myblue}{\hspace{1em}#1\hspace{1em}}}
\renewcommand*{\backrefalt}[4]{%
\ifcase #1 %
(Not cited)%
\or
(Cited on p.~#2)%
\else
(Cited on pp.~#2)%
\fi
}
\begin{document}
\title{ {  Explicit Convergence  Rate of The  Proximal Point Algorithm  under  R-Continuity}}
\author{Ba Khiet Le \thanks{Optimization Research Group, Faculty of Mathematics and Statistics, Ton Duc Thang University, Ho Chi Minh City, Vietnam\vskip 0mm
 E-mail: \texttt{lebakhiet@tdtu.edu.vn}} \qquad Michel Th\' era \thanks{Mathematics and Computer Science Department, University of Limoges, 123 Avenue Albert Thomas,
87060 Limoges CEDEX, France \vskip 0mm
School of
Engineering, IT and Physical
Sciences, Federation University,
Ballarat, Victoria, 3350, Australia \vskip 0mm
 E-mail: \texttt{michel.thera@unilim.fr}}
 }
\date{}

\maketitle

\begin{abstract}
The paper provides a thorough comparison between R-continuity and other fundamental tools in optimization such as metric regularity, metric subregularity and calmness. We show that R-continuity has some advantages in the convergence rate analysis of algorithms solving optimization problems. We also present some properties of R-continuity and study the explicit convergence rate of the Proximal Point Algorithm $(\mathbf{P}\mathbf{P}\mathbf{A})$ under the R-continuity.
\end{abstract}

{\bf Keywords.} R-continuity, metric regularity, metric subregularity, calmness, Proximal Point Algorithm, convergence rate 

{\bf AMS Subject Classification.} 28B05, 34A36, 34A60, 49J52, 49J53, 93D20

\section{Introduction}\label{sec1}
In what follows, $\mathbb{X}$ and $\mathbb{Y}$  are  real Banach spaces whose norms are  designated by $\Vert\cdot\Vert$. We  use  the notation $\ball(x,r)$ to denote the closed ball with center $x$ and radius $r>0$  and by $\ball$  the  closed unit ball  which consists of the elements of norm less than or equal to $1$. By a set-valued mapping  $\mathcal{A}:\mathbb{X}\rightrightarrows\mathbb{Y}$, we mean a mapping which assigns to each $x\in \mathbb{X}$ a subset $\mathcal{A}(x) $ (possibly empty) of $\mathbb{Y}$. The domain, the range and  
the graph of $\mathcal{A}$ are defined respectively by 
$$
{\rm dom}\,\mathcal{A}=\{x\in \mathbb{X}: \mathcal{A}(x)\neq \emptyset \},\;\;{\rm rge}\,\mathcal{A}=\cup_{x\in \mathbb{X}}\mathcal{A}(x),
$$
and 
$${\rm Graph \,}{\mathcal{A}=\{(x,y)\in \mathbb{X}\times \mathbb{Y} \;\text{such that} \; y\in\mathcal{A}(x})\}.$$  
As usual we denote by $\mathcal{A}^{-1} :\mathbb{Y}\rightrightarrows \mathbb{X}$ the inverse of $\mathcal{A}$ defined by 
$$x\in \mathcal{A}^{-1}(y)\; \iff \;  y\in \mathcal{A}(x).$$ 
The notion $\mathbf{d}(x, \mathbf{S})$  stands for the distance from a point $x\in \mathbb{X}$  to a subset $\mathbf{S}\subset \mathbb{X}$:
$$\mathbf{d}(x, \mathbf{S}):=\, \inf_{y\in \mathbf{S}} \Vert x-y\Vert.$$
{Given   a set-valued mapping  $\mathcal{A}: \mathcal{H} \rightrightarrows \mathcal{H} $ defined in a Hilbert space $ \mathcal{H} $,  and inspired by Rockafellar's  paper \cite{Rockafellar}, recently B. K. Le \cite{L1}     introduced the notion of R-continuity   for studying the convergence rate of the Tikhonov regularization of  the inclusion}

\begin{empheq}[box =\mybluebox]{equation}\label{inclu}
0\in \mathcal{A}(x).
\end{empheq}
{ In \cite{L1},} it was proved that R-continuity is  a useful tool to analyze the convergence of $\mathbf{D}\mathbf{C}\mathbf{A}$  (Difference of Convex Algorithm) and   can {explain  why $\mathbf{D}\mathbf{C}\mathbf{A}$   is effective in approximating solutions for a broad class of functions.   In recent decades, there has been a surge of interest to study  variational inclusions such as (\ref{inclu}) since  they modelise
 a variety of important systems. This is the case especially in optimization when the condition for critical points is considered (the Fermat rule).  {Another connection with the inclusion (\ref{inclu}) arises in PDEs and is well discussed in the books
 \cite{Abm,br1}. The fact that  the \textit{solution set }$\mathbf{S}:={\mathcal{A}}^{-1}(0)$ of (\ref{inclu}) involves the inverse operator of $\mathcal{A}$, conducts  naturally to study the continuity of $\mathcal{{A}}^{-1}$ at zero. 
 {According to     \cite{L1},} the mapping  $ \mathcal{A}: \mathcal{H} \rightrightarrows \mathcal{H}$  is said to be \textit{R-continuous  at zero}  if there exist a radius $\sigma>0$ and a non-decreasing modulus function $\rho: \mathbb{R}^+\to \mathbb{R}^+$ satisfying $\lim_{r\to 0^+}\rho(r)=\rho(0)=0$  such that
  \begin{empheq}[box =\mybluebox]{equation}\label{rho}
\mathcal{A}(x) \subset \mathcal{A}(0)+\rho(\Vert x \Vert)\ball,\; \text{ for every}\; x\in \sigma \ball.
\end{empheq}
{Denoting by   $\mathbf{e}\mathbf{x}(C,D):=\sup_{x\in C}\mathbf{d}(x,D)$,   the excess of the set $C$ over the set $D$, with the convention that  $\mathbf{e}\mathbf{x}(\emptyset, D)=0 $  when $D$ is  nonempty  and $\mathbf{e}\mathbf{x}(C, \emptyset)=+\infty$ for any $C$, 
(\ref{rho}) is equivalent to saying that }
\begin{empheq}[box =\mybluebox]{equation}\label{rho1}
\mathbf{e}\mathbf{x}(\mathcal{A}(x), \mathcal{A}(0))\leq  \rho(\Vert x \Vert) \; \text{whenever}\; \Vert x\Vert \leq \sigma. 
\end{empheq}
  When     $\rho(r):=\,Lr$ for some $L>0$,  (\ref{rho1}) is changed into
 \begin{empheq}[box =\mybluebox]{equation}\label{R-Lipschitz}\mathbf{e}\mathbf{x}(\mathcal{A}(x), \mathcal{A}(0))\leq  L \Vert x \Vert \; \text{whenever}\; \Vert x\Vert \leq \sigma.
\end{empheq}
In this case, $\mathcal{A}$ is referred to as  \textit{R-Lipschitz continuous}  at zero or equivalently \textit{upper Lipschitz} at zero   in the sense of Robinson \cite{robinson} or  \textit{outer Lipschitz continuous} at zero \cite{adt}.
In addition, if  $\mathcal{A}(0)$ is a singleton,   R-Lipschitz continuity  at zero of  $\mathcal{A}$  is exactly the  Lipschitz continuity  {at zero} introduced by   R. T. Rockafellar in  \cite{Rockafellar} who considered Lipschitz continuity at zero of set-valued mappings as an important tool in optimization. Rockafellar demonstrated the linear convergence rate of the proximal point algorithm after a finite number of iterations from any starting point $x_0$.}
However the requirement that the solution set is a singleton is often quite restrictive in practice. Thus, allowing the solution set {$\mathbf{S}$ to be set-valued} and {and not requiring the continuity modulus
function $\rho$ to be Lipschitz continuous makes} R-continuity   a competitive alternative. It provides a viable option alongside other fundamental concepts such as metric regularity,  metric subregularity or calmness  in the study of sensitivity analysis as well as in establishing the convergence rate of algorithms. Note that R-continuity can be  also extended to  Banach spaces. In order to compare these regularity concepts, let us recall the definitions of metric regularity, metric subregularity and calmness (see, e.g., \cite{IOFFE, ioffe-outrata, Henrion, Zheng-Ng,  Zheng-Ng1, DmiKru08.1, sacha2016, RockBook2002, DonRoc09, NT,Penot2013,thibault} and the references therein).   Given $\mathcal{A}:\mathbb{X} \rightrightarrows \mathbb{Y}$  where $\mathbb{X}, \mathbb{Y}$ are real Banach spaces and $(\bar{x},\bar{y})\in {\rm Graph \,}\mathcal{A}$,  one says that $\mathcal{A}$  is  \textit{metrically regular}  near $(\bar{x},\bar{y})$ if  a linear error bound holds
 \begin{empheq}[box =\mybluebox]{equation}\label{metric}
\mathbf{d}(x,\mathcal{A}^{-1}(y))\le \kappa \mathbf{d}(y,\mathcal{A}(x)) 
\end{empheq}
for some $\kappa>0$ and for all $(x,y)$ close to $(\bar{x},\bar{y})$. {If (\ref{metric}) is satisfied for all $(x,y)\in \mathbb{X} \times \mathbb{Y}$, then $\mathcal{A}$  is  termed  \textit{globally metrically regular}.}
When  { $(\bar x, 0 )\in \graph \mathcal{A}$}, 
inequality (\ref{metric})  provides an estimate of how far is a point $x$ around $\bar{x}$ to the solution set $\mathcal{A}^{-1}(0)$. However, metric regularity can be too stringent as it requires the inequality (\ref{metric})  to hold in a neighborhood  of {$(\bar{x},\bar{y})$}. 
\vskip 2mm
One says that $\mathcal{A}: \mathbb{X} \rightrightarrows  \mathbb{Y}$ is \textit{calm}  at $(\bar{x},\bar{y})\in {\rm Graph \,} \mathcal{A}$  if  there  exist $\kappa>0, \epsilon >0, \sigma >0$ such that
 \begin{empheq}[box =\mybluebox]{equation}\label{calm}
 \mathcal{A}({x})\cap \ball(\bar y, \epsilon)\subset\mathcal{A}(\bar x)+ \kappa \Vert x-\bar x\Vert\ball,\; \; \;\text{for all}\; x\in \ball(\bar x,\sigma),
\end{empheq}
{or equivalently} 
 \begin{empheq}[box =\mybluebox]{equation}\label{golf1}
\mathbf{e}\mathbf{x}(\mathcal{A}(x) \cap \ball(\bar{y}, \epsilon),\mathcal{A}(\bar{x}))\le \kappa \Vert x-\bar{x}\Vert,  \;\;{\rm for \;all} \;x\in \ball(\bar{x}, \sigma).
\end{empheq}
Note that when the set  $\mathcal{A}(x) \cap \ball(\bar{y},\epsilon)$ is empty the  inequality (\ref{golf1})  is always satisfied. 
  When $\bar{x}=0$,  (\ref{golf1})  becomes
 \begin{empheq}[box =\mybluebox]{equation}\label{train}
\mathbf{e}\mathbf{x}(\mathcal{A}(x) \cap \ball(\bar{y}, \epsilon),\mathcal{A}(0))\le \kappa \Vert x \Vert,  \;\;{\rm  whenever } \;\Vert x\Vert \leq \sigma.
\end{empheq}
Another well-known regularity concept is metric subregularity. The  set-valued mapping $\mathcal{A}: \mathbb{X}\rightrightarrows \mathbb{Y}$  
 is  called \textit{metrically  subregular}  at $(\bar{x}, 0)\in \graph \mathcal{A}$ 
if there exists a constant $\kappa>0$ such that 
 \begin{empheq}[box =\mybluebox]{equation}\label{subregular}
\mathbf{d}(x,\mathcal{A}^{-1}(0))\le \kappa \mathbf{d}(0, \mathcal{A}(x)), \;\;\text{ for all }\;  x \;\text{close to}\; \bar{x}. 
\end{empheq}
 {It is known  (see, e.g., \cite[Proposition 2.62]{IOFFE})  that metric subregularity
  of  $ \mathcal{A} $ at $ (\bar x, 0)$ is equivalent  to 
 calmness of $ \mathcal{A}^{-1}$  at $(0, \bar x)$. 
{From (\ref{rho1}) and (\ref{train}), {let us observe} that  if $\mathcal{A}$ is R-Lipschitz continuous at 0 then it is calm at $(0,\bar{y})$ for any $\bar{y}\in \mathcal{A}(0)$.}
 While  metric regularity is too strong,  {calmness}  is relatively weak to deduce useful properties. For example when the set $\mathcal{A}(x) \cap \ball(\bar{y}, \epsilon)$ is empty for some $x\in \epsilon \ball$, no information can be {deduced}. In addition, with complicated $\mathcal{A}$, we do not know a specific $\bar{y}$ and have to use computers to find an approximation of an element of $\mathcal{A}(0)$.
The first advantage of  R-continuity is  {that it is} unnecessary to know a prior solution $\bar{y}$ in advance.  Furthermore in (\ref{rho}), since $x$ is small, for each $y\in \mathcal{A}(x)$, there exists {some}  $\tilde{y}\in \mathcal{A}(0)$  {close} to $y$. 
 {This fact is meaningful  since it is difficult to ensure $y$ {to be}  in the vicinity of $\bar y$, which is unknown.}
{Secondly, R-continuity  is straightforward and always guarantees  {the} system consistency in Hoffman's sense    \cite{Hoffman}.}
 {Indeed, Theorem \ref{consistent}  establishes that } under the R-continuity of $\mathcal{A}^{-1}$ at zero,  the inclusion (\ref{inclu}) is consistent, i.e., {if $y_\sigma$  has a small norm {and satisfies} $y_\sigma\in \mathcal{A}(x_\sigma)$,   then we can find a solution $\bar x\in \mathbf{S}$ such that  $x_\sigma$ is  {close}   to $\bar x$.  
  {Note that to guarantee the consistency property, {metric}  subregularity or metric regularity must be satisfied  globally.  Thirdly, R-continuity does not require the property (\ref{rho}) {to hold around} a point belonging {to} the graph of the operator; it is easily verified for a broad class of set-valued mappings.} 
In order  to achieve the metric regularity at some point $(\bar x, \bar y)\in {\rm Graph \,} \mathcal{A}$,  the celebrated  Robinson-Ursescu Theorem   \cite{robinson0,Ursescu} requires  for the operator $\mathcal{A}: \mathbb{X} \rightrightarrows  \mathbb{Y}$  {to} have {a} closed {and} convex graph  and  also that $\bar y \in {\rm int}(\mathcal{A}(\mathbb{X}))$ (the interior of $\mathcal{A}(\mathbb{X}))$.
 {R-continuity only necessitates that the graph of the operators be closed.} {Indeed, if  the operator has a closed graph at zero and is locally compact  at zero  then it is {R-continuous} at zero (Theorem \ref{compact})}. Conversely, if  the operator is {R-continuous} at zero and {its} value at zero is closed, then it has {a}  closed graph at zero (Theorem \ref{closed}).  {The condition for having its graph closed at zero is relatively mild.} If an operator's graph is closed, then it has a closed graph
 at zero as well. Furthermore, an operator has a closed graph if and only if its inverse has a closed graph. {Closed graph operators are usually found in optimization {when dealing with}  continuous single-valued mappings, maximally monotone operators (see, e.g., \cite{br}), the sum of two closed graph operators where one of them is single-valued, the sum of two closed graph set-valued operators where one of them is locally compact (Proposition \ref{closedgraph})}.  The Sign function in $\R^n$, which equals to the convex subdifferential of the norm function, is a well-known example of  {an operator}  with compact range and  {is} widely used in {image processing, mechanical and electrical engineering }(see, e.g., \cite{Acary,Micchelli}).

 {Finally, we demonstrate that R-continuity can be used to analyse the explicit convergence rate of the proximal point algorithm ($\mathbf{P}\mathbf{P}\mathbf{A}$)  when  applied to a maximally  monotone operator $\mathcal{A}:  \mathcal{H} \rightrightarrows \mathcal{H}$,  where $\mathcal{H}$ is a Hilbert space.  The $\mathbf{P}\mathbf{P}\mathbf{A}$, introduced by B. Martinet and further developed by Rockafellar, Bauschke and
 Combettes and others  (see, e.g., \cite{BC, Martinet1,Rockafellar}), is an essential tool in convex optimization if $\mathcal{A}$ is set-valued and lacks special structure. We show that if $\mathcal{A}^{-1}$} is globally R-Lipschitz continuous at zero (i.e., $\sigma=+\infty$),  the convergence of the proximal point algorithm is linear from the beginning (Theorem \ref{general}). {When considering the case where  $\mathcal{A}=\partial f$,  i.e. when  $\mathcal{A}$ is the convex subdifferential of  a proper lower semicontinuous {extended real-valued} convex function, not necessarily smooth  $f: \mathcal{H} \to \R \cup \{+\infty\}$,   if $\mathcal{A}^{-1}$ is only R-continuous, we obtain an explicit convergence rate of $\mathbf{P}\mathbf{P}\mathbf{A}$ based on the modulus function.}
If $(\partial f)^{-1}$ is R-Lipschitz {continuous}  at zero, one has the linear convergence of the generated sequence $(x_n)$ after some iterations (Theorem \ref{rate}).

The paper is organized as follows.  First we review the {necessary material about}  {R-continuity } of set-valued mappings and maximally monotone operators in Section \ref{s2}. Some key properties of R-continuity {are} established in Section \ref{s3}. The analysis of  {the} convergence rate of $\mathbf{P}\mathbf{P}\mathbf{A}$ under the  R-continuity is presented in Section \ref{s4}. The paper ends in Section \ref{s5} with some conclusions and perspectives. 

\section{Mathematical preliminaries}\label{s2}

{In what follows,  we extend the notion of R-continuity introduced in \cite{L1} from Hilbert spaces to Banach spaces, defined for any point in the domain of the operators. Let  $\mathcal{A}: \mathbb{X} \rightrightarrows \mathbb{Y}$ be a set-valued mapping  
where $\mathbb{X}, \mathbb{Y}$ are Banach spaces and ${\bar x}  \in {\rm dom}\,\mathcal{A}$.  

\begin{definition}
The set-valued mapping \noindent $\mathcal{A}:\mathbb{X} \rightrightarrows \mathbb{Y}$ is  called {R-continuous} at ${\bar x}  $ if there exist $\sigma>0$ and a non-decreasing function $\rho: \mathbb{R}^+\to \mathbb{R}^+$ satisfying $\lim_{r\to 0^+}\rho(r)=\rho(0)=0$  such that
 \begin{empheq}[box =\mybluebox]{equation}
\mathcal{A}(x) \subset \mathcal{A}({\bar x}  )+\rho(\Vert x-{\bar x}   \Vert)\ball, \;\forall x\in  \ball({\bar x}  ,\sigma),
\end{empheq}
or equivalently, for each $y\in \mathcal{A}(x)$, there exists ${\bar y}  \in  \mathcal{A}({\bar x}  )$ such that $\Vert y-{\bar y}  \Vert\le \rho(\Vert x-{\bar x}   \Vert)$ for all $x\in \ball({\bar x}  ,\sigma)$.

The  function $\rho$  is called a continuity modulus function of $\mathcal{A}$ at ${\bar x}  $ and $\sigma$ is called the radius. We say that $\mathcal{A}$ is {$R$-Lipschitz continuous} at ${\bar x}  $ with modulus $L$  if $\rho( r)=Lr$ for some $L>0$. In addition, if $\sigma=\infty$ then $\mathcal{A}$ is said globally {R-Lipschitz continuous} at ${\bar x}  $. 
\end{definition}

\begin{remark} 
The set-valued  mapping  $\mathcal{A}:\mathbb{X} \rightrightarrows \mathbb{Y}$ is   {R-continuous} at ${\bar x}  $ with modulus function $\rho$ and radius $\sigma$ iff $-\mathcal{A}$ is   {R-continuous} at ${\bar x}  $ with modulus function $\rho$ and radius $\sigma$. 
 \end{remark}
 \begin{definition}
The set-valued mapping  $\mathcal{A}:\mathbb{X} \rightrightarrows \mathbb{Y}$ is  called {R-continuous} if it is R-continuous at any point in its domain. It is called {R-Lipschitz continuous} if it is R-Lipschitz continuous at any point in its domain. In addition, if the radius $\sigma=+\infty$, then it is called  globally {R-Lipschitz continuous}.
\end{definition}
 \begin{proposition} \label{mer}
 If $\mathcal{A}: \mathbb{X} \rightrightarrows \mathbb{Y}$ is globally metrically regular then $\mathcal{A}^{-1}: \mathbb{Y} \rightrightarrows \mathbb{X}$ is globally R-Lipschitz continuous.
 \end{proposition}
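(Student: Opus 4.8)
The plan is to unwind both definitions and observe that the R-Lipschitz estimate for $\mathcal{A}^{-1}$ is essentially a direct transcription of the global metric regularity inequality. Global metric regularity of $\mathcal{A}$ furnishes a constant $\kappa>0$ such that $\mathbf{d}(x,\mathcal{A}^{-1}(y))\le \kappa\,\mathbf{d}(y,\mathcal{A}(x))$ holds for \emph{every} pair $(x,y)\in \mathbb{X}\times \mathbb{Y}$. What I must establish is that for every $\bar{y}\in {\rm dom}\,\mathcal{A}^{-1}$ and every $y\in\mathbb{Y}$ one has $\mathbf{e}\mathbf{x}(\mathcal{A}^{-1}(y),\mathcal{A}^{-1}(\bar{y}))\le \kappa\Vert y-\bar{y}\Vert$; this is precisely the R-Lipschitz continuity of $\mathcal{A}^{-1}$ at $\bar{y}$ with modulus $L=\kappa$ and radius $\sigma=+\infty$, and since $\bar{y}$ ranges over all of ${\rm dom}\,\mathcal{A}^{-1}$ it yields global R-Lipschitz continuity in the sense of the second Definition.

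First I would fix an arbitrary $\bar{y}\in {\rm dom}\,\mathcal{A}^{-1}$ (so that $\mathcal{A}^{-1}(\bar{y})\neq\emptyset$ and distances to it are finite) together with an arbitrary $y\in\mathbb{Y}$. If $\mathcal{A}^{-1}(y)=\emptyset$, the desired excess bound holds trivially by the convention $\mathbf{e}\mathbf{x}(\emptyset,\mathcal{A}^{-1}(\bar{y}))=0$, so I may assume $\mathcal{A}^{-1}(y)\neq\emptyset$ and pick any $x\in\mathcal{A}^{-1}(y)$, i.e. any $x$ with $y\in\mathcal{A}(x)$. Applying the metric regularity inequality to the pair $(x,\bar{y})$ gives $\mathbf{d}(x,\mathcal{A}^{-1}(\bar{y}))\le \kappa\,\mathbf{d}(\bar{y},\mathcal{A}(x))$.

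The key step is then to bound the right-hand side using the membership $y\in\mathcal{A}(x)$: since $y$ is one competitor in the infimum defining $\mathbf{d}(\bar{y},\mathcal{A}(x))$, I obtain $\mathbf{d}(\bar{y},\mathcal{A}(x))\le \Vert \bar{y}-y\Vert$, whence $\mathbf{d}(x,\mathcal{A}^{-1}(\bar{y}))\le \kappa\Vert y-\bar{y}\Vert$. Because this bound is uniform in the choice of $x\in\mathcal{A}^{-1}(y)$, taking the supremum over all such $x$ delivers $\mathbf{e}\mathbf{x}(\mathcal{A}^{-1}(y),\mathcal{A}^{-1}(\bar{y}))\le \kappa\Vert y-\bar{y}\Vert$, which is exactly the required containment $\mathcal{A}^{-1}(y)\subset \mathcal{A}^{-1}(\bar{y})+\kappa\Vert y-\bar{y}\Vert\,\ball$.

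I do not anticipate a genuine obstacle here: the statement is really a change of viewpoint between the two regularity notions, and the only points requiring care are bookkeeping ones — verifying that the excess convention absorbs the case $\mathcal{A}^{-1}(y)=\emptyset$, and noting that it is precisely the \emph{global} hypothesis (validity of the estimate for all $(x,y)$ rather than merely near a reference pair) that permits $\sigma$ to be taken as $+\infty$ and $\bar{y}$ to be chosen arbitrarily in the domain. Were metric regularity assumed only locally, the identical computation would still go through but would yield R-Lipschitz continuity with a finite radius instead.
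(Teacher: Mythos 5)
Your proof is correct and follows essentially the same route as the paper's: fix $\bar y$, take $x\in\mathcal{A}^{-1}(y)$, apply the global metric regularity inequality at $(x,\bar y)$ and bound $\mathbf{d}(\bar y,\mathcal{A}(x))$ by $\Vert y-\bar y\Vert$ using $y\in\mathcal{A}(x)$, then pass to the excess. The only difference is your explicit handling of the case $\mathcal{A}^{-1}(y)=\emptyset$ via the excess convention, which the paper leaves implicit.
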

 \begin{proof}
 Let be given ${y}, {\bar y}   \in {\rm dom}\, \mathcal{A}^{-1}$ and ${x}\in \mathcal{A}^{-1}({y})$. We have $ y\in \mathcal{A}x$ and since $\mathcal{A}$ is globally metrically regular, one has
 \beq
\mathbf{d}( x, \mathcal{A}^{-1}( \bar y)  )\le \kappa\mathbf{d}( \bar y,    \mathcal{A}( x)) \le \kappa \Vert y- \bar y  \Vert,
 \eeq
 for some $\kappa>0$. Since $ x\in \mathcal{A}^{-1} (y)$ is arbitrary, we deduce that 
 $$
 \mathbf{e}\mathbf{x}(\mathcal{A}^{-1}( y), \mathcal{A}^{-1}(\bar  y) )\leq \kappa \Vert y- \bar  y  \Vert
 $$
 and the conclusion follows. 
 \end{proof}
The following simple example shows  that the  metric regularity is strictly stronger than  R-Lipschitz continuity. This  important fact   means that we still obtain the convergence rate of optimization algorithms if only R-Lipschitz continuity or even R-continuity holds (see, e.g., \cite{L1}).
\begin{example}
Let $\mathcal{A}: \R \rightrightarrows \R$ be defined by  $$\mathcal{A}(x):=\,\left\{
\begin{array}{l}
[0,\infty)\;\;\;\;\;\;\;\;{\rm if}\;\; \;\;x=1\\ \\
0 \;\;\;\;\;\;\;\;\;\;\;\;\;\;\;\;{\rm if}\;\;\;-1<x<1\\ \\
(-\infty,0]\;\;\;\;\;\;{\rm if}\;\; \;\;x< 0.
\end{array}\right.$$  Then   $\mathcal{A}^{-1}(y)={\rm \mathbf{Sign}}(y)$ is  R-Lipschitz continuous for any positive modulus. However, if $y_n<0$ and $y_n\to 0$ then 
$$
\mathbf{d}( 1, \mathcal{A}^{-1}(y_n)  )=2, \;\;\mathbf{d}( y_n, \mathcal{A}(1)  )= \mathbf{d}( y_n, [0,\infty)  )=\vert y_n \vert \to 0.
$$ 
Thus $\mathcal{A}$ is not metrically regular at $(1,0)$.
\end{example}

 \noindent In the text that follows, we  {consider} set-valued mappings  with closed graph.
\begin{definition}
We say that $\mathcal{A}:\mathbb{X} \rightrightarrows \mathbb{Y}$ has a closed graph  if  $y_n\in \mathcal{A}(x_n),  \;y_n\to y$ and $x_n\to x$ then $y\in \mathcal{A}(x)$. It is said that $\mathcal{A}:\mathbb{X} \rightrightarrows \mathbb{Y}$ has a closed graph  at zero if  $y_n\in \mathcal{A}(x_n), \;x_n\to 0$ and $y_n\to y$   then $y\in \mathcal{A}(0)$.
\end{definition}
\begin{proposition}
The set-valued mapping  $\mathcal{A}: \mathbb{X} \rightrightarrows \mathbb{Y}$ has a closed graph  if and only if $\mathcal{A}^{-1}: \mathbb{Y} \rightrightarrows \mathbb{X}$ has a closed graph.
\end{proposition}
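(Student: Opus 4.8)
The plan is to reduce the biconditional to a single implication by exploiting the involutive symmetry $(\mathcal{A}^{-1})^{-1} = \mathcal{A}$, and then to prove that one implication by directly unwinding the definition of the inverse together with the sequential characterization of a closed graph. Conceptually, the whole statement is a consequence of a single observation: the defining equivalence $x \in \mathcal{A}^{-1}(y) \iff y \in \mathcal{A}(x)$ says precisely that $\graph \mathcal{A}^{-1}$ is the image of $\graph \mathcal{A}$ under the coordinate-swap map $(x,y) \mapsto (y,x)$, which is a homeomorphism of $\mathbb{X} \times \mathbb{Y}$ onto $\mathbb{Y} \times \mathbb{X}$. Since a homeomorphism carries closed sets to closed sets and back, the two graphs are closed simultaneously. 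I would mention this as the underlying reason, but since the paper adopts a sequential definition of ``closed graph,'' I would carry out the argument in that language to stay self-contained.

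Concretely, I would first prove the forward implication. Assume $\mathcal{A}$ has a closed graph and take sequences with $x_n \in \mathcal{A}^{-1}(y_n)$, $x_n \to x$ and $y_n \to y$. By the definition of the inverse, $x_n \in \mathcal{A}^{-1}(y_n)$ is the same statement as $y_n \in \mathcal{A}(x_n)$. Thus we have $y_n \in \mathcal{A}(x_n)$ with $x_n \to x$ and $y_n \to y$, so the closed-graph hypothesis on $\mathcal{A}$ yields $y \in \mathcal{A}(x)$, that is, $x \in \mathcal{A}^{-1}(y)$. This is exactly the closed-graph property for $\mathcal{A}^{-1}$.

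For the converse I would not repeat the computation: applying the forward implication already proved to the mapping $\mathcal{A}^{-1}$ in place of $\mathcal{A}$, and using that $(\mathcal{A}^{-1})^{-1} = \mathcal{A}$, shows that if $\mathcal{A}^{-1}$ has a closed graph then so does $\mathcal{A}$. The two implications together give the equivalence. I do not expect any genuine obstacle here: the entire content of the proof is the translation between the membership statements $x \in \mathcal{A}^{-1}(y)$ and $y \in \mathcal{A}(x)$, and the only point requiring the slightest care is bookkeeping the swap of the two convergent sequences so that the hypothesis is applied with the coordinates in the correct order.
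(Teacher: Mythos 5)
Your proof is correct and follows the same route the paper takes: the paper simply states that the equivalence follows directly from the definition of $\mathcal{A}^{-1}$, and your argument is exactly that observation spelled out, namely that $x_n \in \mathcal{A}^{-1}(y_n)$ is the same statement as $y_n \in \mathcal{A}(x_n)$, so the sequential closed-graph conditions for $\mathcal{A}$ and $\mathcal{A}^{-1}$ are literally the same condition with the coordinates swapped. The symmetry remark via $(\mathcal{A}^{-1})^{-1} = \mathcal{A}$ is a clean way to avoid writing the second implication.
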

\begin{proof}
It follows directly from the definition of the inverse set-valued mapping $\mathcal{A}^{-1}$.
\end{proof}
\begin{proposition}\label{closedgraph}
Suppose that  $\mathcal{A}=\mathcal{A}_1+\mathcal{A}_2$ where $\mathcal{A}_1, \mathcal{A}_2: \mathbb{X} \rightrightarrows \mathbb{Y}$ has a closed graph;\\
a) If $\mathcal{A}_2$ is single-valued, then $\mathcal{A}$ has a closed graph;\\
b) If $\mathcal{A}_2$ is locally compact, i.e., for each $\bar{x}\in X$ there exists $\epsilon>0$ such that the set $\bigcup_{x\in \ball(\bar{x}, \epsilon)}\mathcal{A}_2(x)$ is compact, then $\mathcal{A}$ has a closed graph.
\end{proposition}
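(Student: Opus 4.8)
The plan is to argue sequentially in both parts. Starting from a sequence $(x_n,y_n)$ in the graph of $\mathcal{A}=\mathcal{A}_1+\mathcal{A}_2$ with $x_n\to x$ and $y_n\to y$, the definition of the sum supplies a decomposition $y_n=u_n+v_n$ with $u_n\in\mathcal{A}_1(x_n)$ and $v_n\in\mathcal{A}_2(x_n)$. The target in each case is to show $y\in\mathcal{A}_1(x)+\mathcal{A}_2(x)$, and the whole difficulty concentrates on controlling the component $v_n$ coming from $\mathcal{A}_2$: once $(v_n)$, or a subsequence, is shown to converge to some $v\in\mathcal{A}_2(x)$, the matching component $u_n=y_n-v_n$ converges to $y-v$, and the closed graph of $\mathcal{A}_1$ forces $y-v\in\mathcal{A}_1(x)$, whence $y=(y-v)+v\in\mathcal{A}(x)$.

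For part (a), with $\mathcal{A}_2$ single-valued, I would write $v_n=\mathcal{A}_2(x_n)$ and $u_n=y_n-\mathcal{A}_2(x_n)\in\mathcal{A}_1(x_n)$. Passing to the limit requires $\mathcal{A}_2(x_n)\to\mathcal{A}_2(x)$, so I would invoke the continuity of the single-valued summand $\mathcal{A}_2$, which is the operative property here; mere single-valuedness together with closedness of the graph does not by itself guarantee $\mathcal{A}_2(x_n)\to\mathcal{A}_2(x)$. Granting $v_n\to\mathcal{A}_2(x)=:v$, one obtains $u_n\to y-v$, and the closed graph of $\mathcal{A}_1$ yields $y-v\in\mathcal{A}_1(x)$, so $y\in\mathcal{A}_1(x)+\mathcal{A}_2(x)=\mathcal{A}(x)$.

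For part (b), with $\mathcal{A}_2$ locally compact, compactness supplies the convergent subsequence that (a) obtained from continuity. Fixing the compactness radius $\epsilon>0$ at $\bar{x}=x$, the set $K:=\bigcup_{z\in\ball(x,\epsilon)}\mathcal{A}_2(z)$ is compact. Since $x_n\to x$, for all large $n$ one has $x_n\in\ball(x,\epsilon)$ and hence $v_n\in\mathcal{A}_2(x_n)\subset K$; compactness of $K$ then yields a subsequence $v_{n_k}\to v\in K$. Along this subsequence $u_{n_k}=y_{n_k}-v_{n_k}\to y-v$, and the closed graphs of $\mathcal{A}_1$ and $\mathcal{A}_2$ give, respectively, $y-v\in\mathcal{A}_1(x)$ and $v\in\mathcal{A}_2(x)$; adding these, $y\in\mathcal{A}(x)$.

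I expect the main obstacle to be exactly this control of the $\mathcal{A}_2$-component in the limit: in the absence of a topological handle on $\mathcal{A}_2$ the decomposition $y_n=u_n+v_n$ can fail to converge in either summand even while $y_n$ converges, so the two hypotheses---continuity in (a) and local compactness in (b)---are precisely what rescue the (sub)sequence extraction and allow the closed-graph property of the two pieces to be applied.
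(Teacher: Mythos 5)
Your argument is exactly the paper's in both parts: decompose $y_n=u_n+v_n$, get convergence of the $\mathcal{A}_2$-component (by continuity in (a), by extracting a subsequence from the compact set $K$ in (b)), and then apply the closed-graph property of $\mathcal{A}_1$ (and of $\mathcal{A}_2$ in (b)) to the other component. Your caveat in part (a) is well-taken and in fact exposes a gap in the paper's own proof, which silently passes to the limit $\mathcal{A}_2(x_n)\to\mathcal{A}_2(x)$: a single-valued map with closed graph need not be continuous, and statement (a) as written is false without continuity --- take $\mathbb{X}=\mathbb{Y}=\R$, $\mathcal{A}_2(x)=1/x$ for $x\neq 0$, $\mathcal{A}_2(0)=0$, and $\mathcal{A}_1(x)=\{1-1/x\}$ for $x\neq 0$, $\mathcal{A}_1(0)=\{0\}$; both graphs are closed, yet $\mathcal{A}(x)=\{1\}$ for $x\neq 0$ while $\mathcal{A}(0)=\{0\}$, so $\mathcal{A}$ does not have a closed graph. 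So your proof of (a) is the correct repair (strengthen the hypothesis on $\mathcal{A}_2$ to continuity), and your proof of (b) is complete as written.
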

\begin{proof}
a) Let $y_n\in \mathcal{A}(x_n)=\mathcal{A}_1(x_n)+\mathcal{A}_2(x_n), y_n\to y$ and $x_n\to x$. We have $y_n=z_n+\mathcal{A}_2(x_n)$ for some $z_n\in \mathcal{A}_1(x_n)$. Since $z_n=y_n-\mathcal{A}_2(x_n)\to y-\mathcal{A}_2(x)$ and $\mathcal{A}_1$ has a closed graph, we imply that $ y-\mathcal{A}_2(x)\in \mathcal{A}_1(x)$ and the conclusion follows. \\
b) Similarly let $y_n\in \mathcal{A}(x_n)=\mathcal{A}_1(x_n)+\mathcal{A}_2(x_n), y_n\to y$ and $x_n\to x$. We have $y_n=z_n+v_n$ for some $z_n\in \mathcal{A}_1(x_n)$ and $v_n \in \mathcal{A}_2(x_n)$. Since $\mathcal{A}_2$ is locally compact, there exists a subsequence of $(v_n)$, without relabelling w.l.o.g,  converging to some $v\in \mathcal{A}_2(x)$. Thus $z_n=y_n-v_n$ converges to $y-v\in \mathcal{A}_1(x)$. Therefore $y \in \mathcal{A}_1(x)+v\in \mathcal{A}_1(x)+\mathcal{A}_2(x).$
\end{proof}
\noindent Finally some useful properties of monotone operators are reminded.  Let $\mathcal{H}$ be a Hilbert space,
\noindent {a set-valued mapping $\mathcal{A}: \mathcal{H}\rightrightarrows \mathcal{H}$  is called {\it monotone} provided
$$
\langle \bar x-\bar y,x-y \rangle \ge 0 \;\;\forall \;x, y\in \mathcal{H}, \bar x\in \mathcal{A}(x) \;{\rm and}\;\bar y\in \mathcal{A}(y).
$$
In addition, it is called {\it maximally monotone} if there is no monotone operator $\mathcal{B}$ such that the graph of $\mathcal{A}$ is strictly {included} in  the graph of $\mathcal{B}$.} The mapping $\mathcal{A}$ is called {\it $\gamma$-strongly monotone} if 
$$
\langle \bar x-\bar y,x-y \rangle \ge \gamma \Vert x-y\Vert^2 \;\;\forall \;x, y\in \mathcal{H}, \bar x\in \mathcal{A}(x) \;{\rm and}\;\bar y\in \mathcal{A}(y).
$$
{Note that such operators have been studied extensively because of their role in convex
analysis and certain partial differential equations.}
\vskip 2mm
\noindent {The {\it resolvent}  of a maximally monotone operator $\mathcal{A}$ is defined respectively by
$
\mathbf{J}_\mathcal{A}:=(\Id+\mathcal{A})^{-1}
$}. {It is  well-known  that resolvents are }  single-valued and non-expansive (see, e.g., \cite{Minty}).

A set-valued mapping $\mathcal{A}: \mathbb{X} \rightrightarrows \mathbb{Y}$  is called {\it $\gamma$-coercive}  with modulus $\gamma>0$  if
 for all $x, y \in {\rm dom}\,\mathcal{A}$ and for all $\bar x \in \mathcal{A}(x)$, $\bar y \in \mathcal{A}(y)$, we have 
\begin{equation}\label{golf}
\Vert \bar x-\bar y\Vert \ge \gamma \Vert x-y\Vert.
\end{equation}
In order to simplify the writing we will use the notation:  for all $x, y \in {\rm dom} \,\mathcal{A}$, we have 
$$
\Vert \mathcal{A}(x)-\mathcal{A}(y)\Vert \ge \gamma \Vert x-y\Vert,
$$
to describe this property.
It is easy to see that a  $\gamma$-strongly monotone operator is $\gamma$-coercive. {Another example is given by }matrices with full column rank (see, e.g., \cite{L1}).  \vskip 2mm Next {we extend  to  a pair of set-valued mappings the notion of monotonicity of a pair of single-valued operators introduced in  Hilbert spaces  by {Adly}-Cojocaru-Le \cite{acl}. }
{\begin{definition}
Let be given two set-valued {mappings} $\mathcal{B}, \mathcal{C}: \mathbb{X} \rightrightarrows \mathbb{Y}$. The pair $(\mathcal{B}, \mathcal{C})$  is called monotone if 
for all $x, y\in \mathbb{X}, x_b \in \mathcal{B}(x), y_b \in \mathcal{B}(y), x_c \in \mathcal{C}(x), y_c \in \mathcal{C}(y)$, we have 
\begin{equation}\label{pluie}
\Vert( x_b -y_b)+(x_c -y_c)\Vert^2\ge \Vert x_b -y_b\Vert^2+\Vert x_c -y_c\Vert^2.
\end{equation}
Equivalently, using the notation previously given (\ref{pluie}) is equivalent to 
$$
\Vert \mathcal{B}(x)-\mathcal{B}(y)+\mathcal{C}(x)-\mathcal{C}(y)\Vert^2\ge \Vert \mathcal{B}(x)-\mathcal{B}(y)\Vert^2+\Vert \mathcal{C}(x)-\mathcal{C}(y)\Vert^2.
$$
\end{definition}
Note that when  $\mathbb{Y}=\mathcal{H}$ is a Hilbert space, the monotonicity of  $(\mathcal{B}, \mathcal{C})$ is equivalent to  
$$
\langle \mathcal{B}(x)-\mathcal{B}(y), \mathcal{C}(x)-\mathcal{C}(y) \rangle \ge 0, \;\;\forall\;x, y \in \mathcal{H}.
$$
}
In addition, we say $(\mathcal{B}, \mathcal{C})$ is $\gamma$-strongly monotone $(\gamma>0)$ if 
$$
\langle \mathcal{B}(x)-\mathcal{B}(y), \mathcal{C}(x)-\mathcal{C}(y) \rangle \ge \gamma \Vert x-y \Vert^2, \;\;\forall\;x, y \in \mathcal{H}.
$$
{\begin{remark}
\begin{enumerate}
\item In Hilbert spaces, the monotonicity of  $(\mathcal{B}, \mathcal{C})$ means that the increments of  $\mathcal{B}$ and $\mathcal{C}$ does not form an obtuse angle.
\item If $(\mathcal{B}, \mathcal{C})$  is  monotone then for all $x, y\in  \mathbb{X}$, one  has
$$
\Vert \mathcal{B}(x)-\mathcal{B}(y)+\mathcal{C}(x)-\mathcal{C}(y)\Vert \ge \Vert \mathcal{B}(x)-\mathcal{B}(y)\Vert.
$$
\item   If $\mathcal{B}$ is monotone then the pair $(\mathcal{B}, \Id)$ is monotone.
\item The strong monotonicity of the  pair $(\mathcal{B}, \mathcal{C})$ {is  important for  the linear convergence of optimization algorithms  solving  the inclusion $0\in \mathcal{B}(x)$ \cite{acl}.
{When  $\mathcal{B}$ fails to be monotone,    with some suitable choice of $\mathcal{C}$}, the  pair $(\mathcal{B}, \mathcal{C}) $  {becomes} strongly monotone,  {as the following  example shows.}
}
 
\end{enumerate}
\end{remark}

 \begin{example}\label{ex2}
 Let $\mathcal{B}, \mathcal{C}: \R^2 \rightrightarrows \R^2$ be defined by
$$
\mathcal{B}(x_1,x_2) = \left( \begin{array}{ccc}
 \mathbf{Sign}(x_2)+3x_2+\sin \vert x_1\vert \\ \\
 \mathbf{Sign}(x_1)+3x_1 + \cos \vert x_2 \vert
\end{array} \right), \;\;\mathcal{C}(x_1,x_2) = \left( \begin{array}{ccc}
3x_2 \\ \\
3x_1
\end{array} \right) 
$$
where
$$
  \mathbf{Sign}(a)= \left\{
\begin{array}{l}
1\;\;\;\;\;\;\;\;{\rm if}\;\; \;\;a> 0\\ \\
${\rm [-1,1]}$ \;\;\;{\rm if}\;\;\;a=0,\\ \\
-1\;\;\;\;\;\;{\rm if}\;\; \;\;a< 0.
\end{array}\right.
$$
Then $\mathcal{B}, \mathcal{C}$ are not monotone but $(\mathcal{B}, \mathcal{C})$ is $6$-strongly monotone. Indeed for all $x=(x_1,x_2)$ and $y=(y_1,y_2)$, we have
\baqn
&&\langle \mathcal{B}(x)-\mathcal{B}(y), \mathcal{C}(x)-\mathcal{C}(y) \rangle \\
&\ge&9 ( x_2-y_2)^2+ 3(\sin \vert x_1\vert-\sin \vert y_1 \vert)(x_2-y_2)+9 ( x_1-y_1)^2+ 3(\cos \vert x_2\vert-\cos \vert y_2 \vert)(x_1-y_1)\\
&\ge& 6 \Vert x-y \Vert^2.
\eaqn
\end{example}

}
}

\section{Properties of R-Continuity} \label{s3}
First we show that the sum (and also the difference) of two R-continuous mappings is also R-continuous.
\begin{theorem}
Suppose that $\mathcal{A}_1, \mathcal{A}_2 :\mathbb{X} \rightrightarrows \mathbb{Y}$ are    {R-continuous} at $\bar x$ then $\mathcal{A}_1+\mathcal{A}_2$ is  also {R-continuous} at $\bar x$.
\end{theorem}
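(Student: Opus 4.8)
The plan is to use the obvious common radius together with the sum of the two moduli, and then simply add the two defining inclusions via elementary Minkowski-sum arithmetic. Since $\mathcal{A}_1$ and $\mathcal{A}_2$ are both R-continuous at $\bar x$, I would first extract the data: there exist radii $\sigma_1,\sigma_2>0$ and non-decreasing moduli $\rho_1,\rho_2:\mathbb{R}^+\to\mathbb{R}^+$ with $\lim_{r\to 0^+}\rho_i(r)=\rho_i(0)=0$ such that
$$\mathcal{A}_i(x)\subset \mathcal{A}_i(\bar x)+\rho_i(\norm{x-\bar x})\ball,\qquad \forall x\in \ball(\bar x,\sigma_i),\ i=1,2.$$
I would then propose the candidate radius $\sigma:=\min\{\sigma_1,\sigma_2\}$ and the candidate modulus $\rho:=\rho_1+\rho_2$.

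The next step is to check that $\rho$ is a legitimate continuity modulus. This is immediate: a sum of non-decreasing functions is non-decreasing, and $\lim_{r\to 0^+}\rho(r)=\lim_{r\to 0^+}\rho_1(r)+\lim_{r\to 0^+}\rho_2(r)=0=\rho(0)$. The core computation is then to fix an arbitrary $x\in\ball(\bar x,\sigma)$; since $\sigma\le\sigma_i$, both displayed inclusions hold at $x$ at once. Adding them as sets and using $(\mathcal{A}_1+\mathcal{A}_2)(x)=\mathcal{A}_1(x)+\mathcal{A}_2(x)$ yields
$$(\mathcal{A}_1+\mathcal{A}_2)(x)\subset \mathcal{A}_1(\bar x)+\mathcal{A}_2(\bar x)+\rho_1(\norm{x-\bar x})\ball+\rho_2(\norm{x-\bar x})\ball.$$
Invoking the Minkowski-sum bound $r_1\ball+r_2\ball\subset(r_1+r_2)\ball$ (itself just the triangle inequality) and observing that $(\mathcal{A}_1+\mathcal{A}_2)(\bar x)=\mathcal{A}_1(\bar x)+\mathcal{A}_2(\bar x)$, the right-hand side becomes $(\mathcal{A}_1+\mathcal{A}_2)(\bar x)+\rho(\norm{x-\bar x})\ball$, which is exactly the R-continuity inclusion for $\mathcal{A}_1+\mathcal{A}_2$ with modulus $\rho$ and radius $\sigma$.

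I do not expect any real obstacle here; the statement is a soft closure property and the argument is essentially a one-line set computation once the bookkeeping is in place. The only points meriting a word of care are taking the \emph{minimum} of the two radii (so that both hypotheses are simultaneously available on $\ball(\bar x,\sigma)$) and the elementary fact that $\rho_1+\rho_2$ is again a valid modulus. The same proof, with $-\mathcal{A}_2$ in place of $\mathcal{A}_2$, covers the difference $\mathcal{A}_1-\mathcal{A}_2$ as announced, since $-\mathcal{A}_2$ is R-continuous at $\bar x$ with the same modulus and radius by the Remark following the definition.
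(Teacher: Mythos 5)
Your proposal is correct and follows essentially the same route as the paper: common radius $\sigma=\min\{\sigma_1,\sigma_2\}$, simultaneous use of both inclusions, and the triangle inequality to combine the error balls. The only cosmetic difference is the choice of modulus --- you use $\rho_1+\rho_2$ while the paper uses $2\max\{\rho_1,\rho_2\}$, which are equivalent up to a factor of $2$ --- and your Minkowski-sum phrasing is just the paper's element-wise argument written at the level of sets.
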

\begin{proof}
Let $\rho_1, \rho_2$ and $\sigma_1, \sigma_2$ be the modulus functions and radii of $\mathcal{A}_1$ and  $\mathcal{A}_2$ respectively. We set $\sigma:=\min \{ \sigma_1, \sigma_2\}$ and $\rho(r):=\max\{\rho_1(r),\rho_2(r)\}$ for all $r\ge 0$ then $\rho$ is non-decreasing and $\lim_{r\to 0^+}\rho(r)=\rho(0)=0$. Taking $x\in   \ball(\bar x,\sigma)$ and $y\in \mathcal{A}_1(x)+\mathcal{A}_2(x)$, then $y=y_1+y_2$ where $y_1\in \mathcal{A}_1(x)$ and  $y_2\in \mathcal{A}_2(x)$. Since $\mathcal{A}_1, \mathcal{A}_2$ are   {R-continuous} at $\bar x$ there exist $\bar y_1\in  \mathcal{A}_1(\bar x)$ and $\bar y_2\in  \mathcal{A}_2(\bar x)$ such that $\Vert y_1-\bar y_1\Vert\le \rho_1(\Vert x-\bar x \Vert)$ and $\Vert y_2-\bar y_2\Vert\le \rho_2(\Vert x-\bar x \Vert)$. Let $\bar y=\bar y_1+\bar y_2\in \mathcal{A}_1(\bar x)+\mathcal{A}_2(\bar x)$ then 
\beq
 \Vert y-\bar y\Vert\le \Vert y_1-\bar y_1\Vert+\Vert y_2-\bar y_2\Vert \le 2\rho(\Vert x-\bar x \Vert).
 \eeq
 It means that $\mathcal{A}_1+\mathcal{A}_2$ is   {R-continuous} at $\bar x$ with modulus function $2\rho$ and radius $\sigma$.
\end{proof}

{Next we show that R-continuity is satisfied for a large class of operators and has a closed connection  with the closed graph property  at zero.}

\begin{theorem}\label{compact}
If $\mathcal{A}:  \mathbb{X}\rightrightarrows \mathbb{Y}$ has  a closed graph  at zero and {is}  locally compact  at zero  then $\mathcal{A}$ is {$R$-continuous} at zero.
\end{theorem}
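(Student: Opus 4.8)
The plan is to exhibit an explicit modulus function and verify, one by one, the three requirements in the definition of R-continuity. Let $\epsilon>0$ be the radius furnished by local compactness at zero, so that the set $K:=\overline{\bigcup_{x\in\ball(0,\epsilon)}\mathcal{A}(x)}$ is compact, and set $\sigma:=\epsilon$. I would then define the candidate modulus by
$$
\rho(r):=\sup_{\Vert x\Vert\le r}\mathbf{ex}(\mathcal{A}(x),\mathcal{A}(0)),\qquad 0\le r\le\sigma,
$$
extended by $\rho(r):=\rho(\sigma)$ for $r>\sigma$. By construction $\rho$ is non-decreasing, and the inequality $\mathbf{ex}(\mathcal{A}(x),\mathcal{A}(0))\le\rho(\Vert x\Vert)$ holds for every $x\in\ball(0,\sigma)$, which is exactly the form of R-continuity recorded in equation (\ref{rho1}). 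Thus it remains only to show that $\rho$ is finite-valued and that $\rho(r)\to 0=\rho(0)$ as $r\to 0^+$.

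Finiteness follows directly from compactness. Fixing any $y_0\in\mathcal{A}(0)$ (recall that $0\in\dom\mathcal{A}$, so this value is nonempty), every $y\in\mathcal{A}(x)$ with $\Vert x\Vert\le\sigma$ lies in $K$, whence $\mathbf{d}(y,\mathcal{A}(0))\le\Vert y-y_0\Vert\le\mathrm{diam}(K)<\infty$. Taking suprema gives $\rho(\sigma)\le\mathrm{diam}(K)<\infty$, and hence $\rho$ is finite throughout.

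The crux is the limit $\rho(0^+)=0$, which I would establish by contradiction. Since $\rho$ is non-decreasing the one-sided limit $\ell:=\lim_{r\to 0^+}\rho(r)$ exists in $[0,\infty)$; suppose $\ell>0$. For each $n$ the bound $\rho(1/n)\ge\ell>\ell/2$, together with the definition of the supremum and of the excess, lets me pick a point $x_n$ with $\Vert x_n\Vert\le 1/n$ and a point $y_n\in\mathcal{A}(x_n)$ with $\mathbf{d}(y_n,\mathcal{A}(0))>\ell/2$. Then $x_n\to 0$, and for $n$ large enough $y_n\in K$, so compactness of $K$ yields a subsequence $y_{n_k}\to y^*$. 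Applying the closed-graph-at-zero hypothesis to $y_{n_k}\in\mathcal{A}(x_{n_k})$, $x_{n_k}\to 0$, $y_{n_k}\to y^*$ gives $y^*\in\mathcal{A}(0)$, and therefore $\mathbf{d}(y_{n_k},\mathcal{A}(0))\le\Vert y_{n_k}-y^*\Vert\to 0$, contradicting $\mathbf{d}(y_{n_k},\mathcal{A}(0))>\ell/2$. Hence $\ell=0$, so $\rho$ is a genuine continuity modulus and $\mathcal{A}$ is R-continuous at zero.

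The main obstacle is the limit step, and within it the sole delicate point is the extraction of the convergent subsequence $y_{n_k}\to y^*$: this is precisely where local compactness is indispensable, since in an infinite-dimensional Banach space boundedness of $(y_n)$ alone would not permit passage to a (norm-)convergent subsequence before invoking the closed graph property. Everything else is bookkeeping with the excess functional and the monotonicity of $\rho$.
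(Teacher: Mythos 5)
Your proof is correct and follows essentially the same route as the paper's: both define the ``optimal'' modulus $\rho$ (your supremum of excesses is the same quantity as the paper's infimum of admissible $\delta$), observe it is non-decreasing and finite by local compactness, and rule out a positive limit at $0^+$ by extracting a convergent subsequence from local compactness and invoking the closed-graph-at-zero property to reach a contradiction. Your write-up merely makes the finiteness check and the subsequence extraction more explicit than the paper does.
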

\begin{proof}
 {We define the function $\rho$ as follows: { set}  $\rho(0)=0$ and   if $\sigma>0$, {set }
 $$\rho(\sigma)=\inf\{ \delta>0: \mathcal{A}(x) \subset \mathcal{A}(0)+\delta\ball,\;\;\forall x\in \sigma \ball\}.$$
 It is easy to see that $\rho$ is well-defined and non-decreasing because $\mathcal{A}$ is locally compact at zero.  Since $\rho$ is non-decreasing and bounded  {from} below by $0$,   $\lim_{\sigma\to 0^+}\rho(\sigma)$ exists. {If we suppose } that $\mathcal{A}$  is not {R-continuous} at $0$,  {then} we must have $\lim_{\sigma\to 0^+}\rho(\sigma)=\delta^*>0$.} {Hence}   there exist two sequences $(x_n)$, $(y_n)$ such that $x_n \to 0$, $y_n\in \mathcal{A}(x_n)$  and 
\beq\label{closedr}
y_n\notin \mathcal{A}(0)+\frac{\delta^*}{2}\ball.
\eeq
Since $\mathcal{A}$ is locally compact at zero, { on relabeling if necessary, we may suppose that $(y_n)$ converges to  $\bar y \in   \mathcal{A}(0)$ since 
  $\mathcal{A}$ has a closed graph  at zero. This contradicts $(\ref{closedr})$ } and the  proof is complete. 
\end{proof}
\begin{theorem}\label{closed}
If {$\mathcal{A}: \mathbb{X}\rightrightarrows \mathbb{Y}$ is R-continuous} at zero and $\mathcal{A}(0)$ is closed then $\mathcal{A}$ has a closed graph  at zero.
\end{theorem}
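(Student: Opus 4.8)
The plan is to verify the closed-graph-at-zero property directly from its definition. So I would begin with arbitrary sequences $(x_n)$ and $(y_n)$ satisfying $y_n\in\mathcal{A}(x_n)$, $x_n\to 0$ and $y_n\to y$, and the goal is to conclude that $y\in\mathcal{A}(0)$.

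First, since $x_n\to 0$, for all sufficiently large $n$ we have $\Vert x_n\Vert\le\sigma$, so the R-continuity inclusion is applicable to $x_n$. Invoking the equivalent pointwise formulation of R-continuity at zero, for each such $n$ I would extract a point $\bar y_n\in\mathcal{A}(0)$ with
\beq\label{closed-approx}
\Vert y_n-\bar y_n\Vert\le\rho(\Vert x_n\Vert).
\eeq
This is the crucial step: R-continuity supplies, for every term $y_n$, a companion point in the fixed closed set $\mathcal{A}(0)$ that is quantitatively close to $y_n$.

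Next I would use the triangle inequality together with \eqref{closed-approx} to obtain
\beq
\Vert y-\bar y_n\Vert\le\Vert y-y_n\Vert+\Vert y_n-\bar y_n\Vert\le\Vert y-y_n\Vert+\rho(\Vert x_n\Vert).
\eeq
The first term tends to $0$ because $y_n\to y$, while the second tends to $0$ because $\Vert x_n\Vert\to 0$ and $\rho$ satisfies $\lim_{r\to 0^+}\rho(r)=\rho(0)=0$. Hence $\bar y_n\to y$. Since each $\bar y_n$ lies in $\mathcal{A}(0)$ and $\mathcal{A}(0)$ is closed by hypothesis, the limit $y$ belongs to $\mathcal{A}(0)$, which is exactly the required conclusion.

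I do not expect a genuine obstacle here; the argument is a short $\epsilon$-type estimate. The only point worth emphasizing is that, unlike the converse direction in Theorem \ref{compact}, no compactness or subsequence extraction is needed: the modulus condition forces the approximating sequence $(\bar y_n)$ to converge to $y$ outright, so closedness of $\mathcal{A}(0)$ alone closes the argument.
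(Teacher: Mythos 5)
Your proof is correct and follows essentially the same route as the paper: apply the R-continuity inclusion $\mathcal{A}(x_n)\subset\mathcal{A}(0)+\rho(\Vert x_n\Vert)\ball$ to produce points of $\mathcal{A}(0)$ converging to $y$, then invoke closedness of $\mathcal{A}(0)$. You merely spell out more carefully the extraction of the companion points $\bar y_n$ and the restriction to $\Vert x_n\Vert\le\sigma$, which the paper leaves implicit.
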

\begin{proof}
Suppose that $y_n\to y$, $x_n\to 0$ and $y_n\in \mathcal{A}(x_n)$. From the R-continuity of $\mathcal{A}$,  we have 
$$
y_n\in \mathcal{A}(x_n)\subset \mathcal{A}(0)+\rho(\Vert x_n \Vert).
$$
Since  $y_n\to y$, $\rho(\Vert x_n \Vert)\to 0$  and  $\mathcal{A}(0)$ is closed, we deduce that  $y\in \mathcal{A}(0)$ and thus $\mathcal{A}$ has a closed graph  at zero.
\end{proof}
{Although R-continuity is  straightforward, it always guarantees  the consistency  of the system in Hoffman's sense    \cite{Hoffman}. }
\begin{theorem}\label{consistent}
Let $\mathcal{A}: \mathbb{X} \rightrightarrows \mathbb{Y}$ be a set-valued mapping. If $\mathcal{A}^{-1}$ is R-continuous at zero then the inclusion $0\in \mathcal{A}(x)$ is consistent in the sense of Hoffman, i.e., { if $y_\sigma$  has a small norm satisfying}   $y_\sigma\in \mathcal{A}(x_\sigma)$  then there exists a solution $\bar x\in \mathbf{S}:=\mathcal{A}^{-1}(0)$ such that  $x_\sigma$ is close to $\bar x$.
\end{theorem}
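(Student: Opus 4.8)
The plan is to recognize that the statement is essentially a reformulation of the definition of R-continuity applied to $\mathcal{A}^{-1}$, once we identify $\mathcal{A}^{-1}(0)$ with the solution set $\mathbf{S}$. First I would record the two purely definitional facts: by the definition of the inverse, $y_\sigma \in \mathcal{A}(x_\sigma)$ is equivalent to $x_\sigma \in \mathcal{A}^{-1}(y_\sigma)$, and $\mathcal{A}^{-1}(0) = \{x \in \mathbb{X} : 0 \in \mathcal{A}(x)\} = \mathbf{S}$. Note also that R-continuity of $\mathcal{A}^{-1}$ at zero presupposes $0 \in {\rm dom}\,\mathcal{A}^{-1}$, i.e. $\mathbf{S} \neq \emptyset$, so a candidate solution exists to begin with.

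Next I would unwind the R-continuity hypothesis: there exist a radius $\sigma > 0$ and a non-decreasing modulus $\rho$ with $\lim_{r \to 0^+}\rho(r) = \rho(0) = 0$ such that $\mathcal{A}^{-1}(y) \subset \mathbf{S} + \rho(\Vert y \Vert)\ball$ whenever $\Vert y \Vert \le \sigma$. The quantitative meaning of ``$y_\sigma$ has small norm'' is precisely $\Vert y_\sigma \Vert \le \sigma$, which places $y_\sigma$ in the region where this inclusion is valid. Applying the inclusion to $y = y_\sigma$ together with $x_\sigma \in \mathcal{A}^{-1}(y_\sigma)$ gives $x_\sigma \in \mathbf{S} + \rho(\Vert y_\sigma \Vert)\ball$. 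By the definition of the Minkowski sum with the ball, this furnishes a point $\bar x \in \mathbf{S}$ with $\Vert x_\sigma - \bar x \Vert \le \rho(\Vert y_\sigma \Vert)$, so $\bar x$ is the desired solution. Since $\rho(r) \to 0$ as $r \to 0^+$, the bound $\rho(\Vert y_\sigma \Vert)$ is small when $\Vert y_\sigma \Vert$ is small, which is exactly the assertion that $x_\sigma$ lies close to $\bar x$.

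There is no genuine obstacle here: the content is a direct translation of the R-continuity inclusion for $\mathcal{A}^{-1}$, and the only point requiring care is to make the informal words \emph{small norm} and \emph{close} precise by tying them to the radius $\sigma$ and the modulus $\rho$ supplied by the hypothesis. In particular, I expect that no completeness, closedness, or monotonicity of $\mathcal{A}$ is needed, and that no separate Hoffman-type argument is required: the consistency is already encoded in the R-continuity of $\mathcal{A}^{-1}$ at zero, which is what makes this property a convenient alternative to metric (sub)regularity for guaranteeing solvability estimates.
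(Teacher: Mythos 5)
Your proposal is correct and follows essentially the same route as the paper: both apply the R-continuity inclusion $\mathcal{A}^{-1}(y_\sigma)\subset \mathbf{S}+\rho(\cdot)\ball$ to the point $x_\sigma\in\mathcal{A}^{-1}(y_\sigma)$ and read off a nearby $\bar x\in\mathbf{S}$. The only (harmless) difference is that you keep the slightly sharper bound $\rho(\Vert y_\sigma\Vert)$ where the paper, using monotonicity of $\rho$, states the coarser $\mathbf{d}(x_\sigma,\mathbf{S})\le\rho(\sigma)$.
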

\begin{proof}
Suppose that $\mathcal{A}^{-1}$ is R-continuous at zero with  modulus function $\rho$ and  radius $\sigma$. Let  $ y_e\in \mathcal{A}(x_\sigma)$ and $\Vert y_e \Vert \le  \sigma $. We {claim}  that we can find some $\bar x $ such that $ 0\in \mathcal{A}(\bar x )$ and $\Vert x_\sigma-\bar x  \Vert$ is also small.   Indeed, we have 
$$
x_\sigma \in \mathcal{A}^{-1}( y_e)\subset  \mathcal{A}^{-1}(0)+\rho(\sigma)\ball=\mathbf{S}+\rho(\sigma)\ball
$$ 
which implies that 
$$
\mathbf{d}(x_\sigma,\mathbf{S})\le \rho(\sigma)
$$
and the conclusion follows. 
\end{proof}
\begin{remark}
Let us note that to obtain the consistency property, {metric}  subregularity or metric regularity have to be satisfied globally. Indeed suppose that {metric }subregularity (\ref{subregular}) holds and $\mathbf{d}(0, \mathcal{A}(x))$ is small but $x$ is not close to ${\bar x} $, we cannot conclude that $x$ is close to a solution.  
\end{remark}

In optimization, when we consider the inclusion $0\in \mathcal{A}(x)$, we want to know whether $\mathcal{A}^{-1}$ is R-continuous or even R-Lipschitz continuous. Here we provide some cases  where the global {R-Lipschitz continuity} is satisfied. It is known that the inverse of any square matrix is globally {R-Lipschitz continuous} \cite[Proposition 2.10]{L1}. Now we prove that the inverse of  any matrix is globally {R-Lipschitz continuous}. Note that this result cannot be deduced {from}  the  Robinson-Ursescu Theorem. Finally we give some nontrivial nonlinear examples (see also Proposition \ref{mer} and \cite[Theorem 7]{adt}).
\begin{theorem}\label{R-Lipschitz}
If $\mathbf{A}\in \R^{m\times n}$ is a matrix, then $\mathbf{A}^{-1}$ is globally {R-Lipschitz continuous}. 
\end{theorem}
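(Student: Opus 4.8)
The plan is to produce a single Lipschitz modulus that works simultaneously at every point of the domain. First I would identify $\dom \mathbf{A}^{-1} = {\rm rge}\,\mathbf{A}$ and observe that R-Lipschitz continuity need only be verified there: if $y \notin {\rm rge}\,\mathbf{A}$ then $\mathbf{A}^{-1}(y) = \emptyset$ and $\mathbf{e}\mathbf{x}(\emptyset, \mathbf{A}^{-1}(\bar y)) = 0$ by the convention on the excess, so the defining inequality holds trivially. Thus I may fix $\bar y \in {\rm rge}\,\mathbf{A}$ and restrict attention to $y \in {\rm rge}\,\mathbf{A}$; for such $y$ the solution set $\mathbf{A}^{-1}(y)$ is the affine subspace $x_y + \ker \mathbf{A}$, where $x_y$ is any particular solution of $\mathbf{A}x = y$.

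The heart of the argument is an explicit ``lifting'' of an increment in the $y$-variable to an increment in the $x$-variable of controlled size. Let $\mathbf{A}^{\dagger}$ denote the Moore--Penrose pseudoinverse of $\mathbf{A}$, and recall $\Vert \mathbf{A}^{\dagger}\Vert = 1/\gamma$, where $\gamma>0$ is the smallest nonzero singular value of $\mathbf{A}$ (when $\mathbf{A}\neq 0$). Given $y, \bar y \in {\rm rge}\,\mathbf{A}$ and an arbitrary $x \in \mathbf{A}^{-1}(y)$, I would set $\bar x := x + \mathbf{A}^{\dagger}(\bar y - y)$. Since $\mathbf{A}\mathbf{A}^{\dagger}$ is the orthogonal projection onto ${\rm rge}\,\mathbf{A}$ and both $y, \bar y$ lie in ${\rm rge}\,\mathbf{A}$, one checks $\mathbf{A}\bar x = \mathbf{A}x + \mathbf{A}\mathbf{A}^{\dagger}(\bar y - y) = y + (\bar y - y) = \bar y$, so $\bar x \in \mathbf{A}^{-1}(\bar y)$; moreover $\Vert x - \bar x \Vert = \Vert \mathbf{A}^{\dagger}(\bar y - y)\Vert \le \Vert \mathbf{A}^{\dagger}\Vert\,\Vert y - \bar y \Vert$. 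Hence $\mathbf{d}(x, \mathbf{A}^{-1}(\bar y)) \le \Vert \mathbf{A}^{\dagger}\Vert\,\Vert y - \bar y\Vert$, and taking the supremum over $x \in \mathbf{A}^{-1}(y)$ yields $\mathbf{e}\mathbf{x}(\mathbf{A}^{-1}(y), \mathbf{A}^{-1}(\bar y)) \le \Vert \mathbf{A}^{\dagger}\Vert\,\Vert y - \bar y\Vert$. Because this holds for every $y$ and the constant $L := \Vert \mathbf{A}^{\dagger}\Vert$ depends only on $\mathbf{A}$ and not on $\bar y$, the map $\mathbf{A}^{-1}$ is globally R-Lipschitz continuous with modulus $L$ at every point of its domain.

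I would write the construction of $\bar x$ self-containedly, so that only the two facts $\mathbf{A}\mathbf{A}^{\dagger}|_{{\rm rge}\,\mathbf{A}} = \Id$ and $\Vert \mathbf{A}^{\dagger}\Vert = 1/\gamma$ are needed; equivalently, one can avoid the pseudoinverse and instead restrict $\mathbf{A}$ to $(\ker \mathbf{A})^{\perp}$, where it is a linear isomorphism onto ${\rm rge}\,\mathbf{A}$ bounded below by $\gamma$, and define $\bar x$ through this inverse. The point I expect to require the most care is exactly the feature that makes the statement nontrivial and places it outside the reach of the Robinson--Ursescu theorem: $\mathbf{A}$ need be neither square nor surjective, so ${\rm rge}\,\mathbf{A}$ may be a proper subspace of $\R^m$ and $\mathbf{A}\mathbf{A}^{\dagger}$ is a genuine projection rather than the identity. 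The reduction to $y, \bar y \in {\rm rge}\,\mathbf{A}$ carried out in the first step is precisely what guarantees $\mathbf{A}\mathbf{A}^{\dagger}(\bar y - y) = \bar y - y$, and therefore that $\bar x$ is a true preimage of $\bar y$; verifying this membership correctly is the only genuinely delicate step, the remaining estimates being immediate from $\Vert \mathbf{A}^{\dagger}\Vert = 1/\gamma$.
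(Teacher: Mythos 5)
Your proof is correct and is essentially the paper's argument in different notation: your point $\bar x = x + \mathbf{A}^{\dagger}(\bar y - y)$ is exactly the paper's choice $\bar x = x_i' + x_k$ coming from the orthogonal decomposition of $x$ and $x'$ along $\ker\mathbf{A}$ and $(\ker\mathbf{A})^{\perp} = \mathrm{Im}(\mathbf{A}^T\mathbf{A})$, i.e.\ both proofs keep the kernel component of $x$ and correct the row-space component so as to land in $\mathbf{A}^{-1}(\bar y)$. The only differences are cosmetic: you obtain the sharper constant $\Vert \mathbf{A}^{\dagger}\Vert = 1/\sigma_{\min}$ where the paper settles for $\Vert\mathbf{A}^{T}\Vert/k$ via the lower bound for $\mathbf{A}^T\mathbf{A}$ on its image, and you spell out the empty-value case $y\notin \mathrm{rge}\,\mathbf{A}$, which the paper leaves implicit.
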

\begin{proof}
Let be given $\bar y\in   {\rm dom}\,\mathbf{A}^{-1}$. For all $y\in {\rm dom\,}(\mathbf{A}^{-1})$ and $x\in \mathbf{A}^{-1}(y)$, we want to find $\bar x\in \mathbf{A}^{-1}(\bar y)$ such that 
$$
\Vert x-\bar x\Vert \le \kappa \Vert y-\bar y\Vert, 
$$
for some $\kappa>0$. First we take some $x'\in \mathbf{A}^{-1}(\bar y)$. We have $y=\mathbf{A}x$ and $\bar y=\mathbf{A}x'$. Note that $\mathbf{A}^T\mathbf{A}\in \R^{n\times n}$ is a positive semi-definite matrix.  Let $x=x_i+x_k$ where $x_i,  x_k$ are the projections of $x$ onto ${\rm Im}(\mathbf{A}^T\mathbf{A})$ and ${\rm Ker}(\mathbf{A}^T\mathbf{A})={\rm Ker}(\mathbf{A})$, respectively. Similarly we decompose  $x'=x'_i+x'_k$. Then we have 
$$
\Vert\mathbf{A}^T\Vert \Vert y-\bar y\Vert \ge  \Vert\mathbf{A}^T(y-\bar y)\Vert =\Vert \mathbf{A}^T\mathbf{A}(x_i-x_i')\Vert \ge k \Vert x_i-x_i'\Vert
$$
for some $k>0$ (see, e.g., \cite[Lemma 2.8]{L1} or \cite[Lemma 3]{tbp}).
Thus if we choose $\bar x=x_i'+x_k$ then $\mathbf{A}\bar x=\mathbf{A}(x_i'+x_k)=\mathbf{A}(x_i'+x'_k)=\mathbf{A}x'=\bar y$ since $x_k, x'_k \in {\rm Ker}(\mathbf{A})$. In addition, we have
$$
\Vert x-\bar x\Vert =\Vert x_i-x_i'\Vert\le \frac{\Vert \mathbf{A}^T\Vert}{k}\Vert y-\bar y\Vert,
$$
and the conclusion follows. 
\end{proof}

\begin{proposition}
If $\mathcal{A}: \R^n \rightrightarrows \R^m$ has the form $\mathcal{A}=\mathbf{B}+\mathcal{F}\circ \mathbf{B}$ where $\mathbf{B}\in \R^{m\times n}$ is a matrix and $\mathcal{F}: \R^m \rightrightarrows \R^m$ is monotone then $\mathcal{A}^{-1}$ is globally R-Lipschitz continuous. 
\end{proposition}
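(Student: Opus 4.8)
The plan is to recognize that $\mathcal{A}^{-1}$ factors as the composition $\mathbf{B}^{-1}\circ \mathbf{J}_{\mathcal{F}}$, where $\mathbf{J}_{\mathcal{F}}:=(\Id+\mathcal{F})^{-1}$ is the resolvent of $\mathcal{F}$, and then to combine the non-expansiveness of $\mathbf{J}_{\mathcal{F}}$ with the global R-Lipschitz continuity of the matrix inverse $\mathbf{B}^{-1}$ already supplied by Theorem \ref{R-Lipschitz}. First I would unravel the defining inclusion: fix $y$ and $x$ with $y\in \mathcal{A}(x)=\mathbf{B}x+\mathcal{F}(\mathbf{B}x)$ and set $u:=\mathbf{B}x$; then $y-u\in \mathcal{F}(u)$, equivalently $y\in(\Id+\mathcal{F})(u)$, i.e. $u\in \mathbf{J}_{\mathcal{F}}(y)$. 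Conversely, any $x$ with $\mathbf{B}x=u$ and $u\in \mathbf{J}_{\mathcal{F}}(y)$ satisfies $y\in \mathcal{A}(x)$. Hence $\mathcal{A}^{-1}(y)=\mathbf{B}^{-1}\big(\mathbf{J}_{\mathcal{F}}(y)\big)$, and in particular $\dom\mathbf{J}_{\mathcal{F}}\supseteq \dom\mathcal{A}^{-1}$, so the resolvent is defined wherever it is needed.

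The second step is to show that $\mathbf{J}_{\mathcal{F}}$ is single-valued and non-expansive on its domain, using only the monotonicity of $\mathcal{F}$ (maximality is not required here, so I would avoid invoking Minty's theorem). If $u=\mathbf{J}_{\mathcal{F}}(y)$ and $\bar u=\mathbf{J}_{\mathcal{F}}(\bar y)$, then $y-u\in \mathcal{F}(u)$ and $\bar y-\bar u\in \mathcal{F}(\bar u)$, so monotonicity of $\mathcal{F}$ gives $\langle (y-u)-(\bar y-\bar u),\,u-\bar u\rangle \ge 0$, which rearranges to $\langle y-\bar y,\,u-\bar u\rangle \ge \Vert u-\bar u\Vert^2$. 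Cauchy--Schwarz then yields $\Vert u-\bar u\Vert \le \Vert y-\bar y\Vert$; taking $y=\bar y$ forces $u=\bar u$ and hence single-valuedness. Equivalently, this is the observation that $\Id+\mathcal{F}$ is $1$-strongly monotone, so its inverse is single-valued and $1$-Lipschitz.

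Finally I would assemble the two estimates. Given $y,\bar y\in \dom\mathcal{A}^{-1}$, write $u=\mathbf{J}_{\mathcal{F}}(y)$ and $\bar u=\mathbf{J}_{\mathcal{F}}(\bar y)$, and take any $x\in \mathcal{A}^{-1}(y)$, so that $\mathbf{B}x=u$. Theorem \ref{R-Lipschitz} provides a constant $\kappa>0$ depending only on $\mathbf{B}$, together with a point $\bar x\in \mathbf{B}^{-1}(\bar u)=\mathcal{A}^{-1}(\bar y)$ satisfying $\Vert x-\bar x\Vert \le \kappa\,\Vert u-\bar u\Vert$. Chaining this with the non-expansiveness from the second step gives $\Vert x-\bar x\Vert \le \kappa\,\Vert y-\bar y\Vert$, and since $x\in\mathcal{A}^{-1}(y)$ was arbitrary, $\mathbf{e}\mathbf{x}\big(\mathcal{A}^{-1}(y),\mathcal{A}^{-1}(\bar y)\big)\le \kappa\,\Vert y-\bar y\Vert$ for all $y,\bar y$. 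As $\kappa$ is independent of the base point, $\mathcal{A}^{-1}$ is globally R-Lipschitz continuous.

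The point that deserves the most care is the uniformity of the constant $\kappa$ in Theorem \ref{R-Lipschitz}: its value $\Vert\mathbf{B}^{T}\Vert/k$ depends only on $\mathbf{B}$ and not on the base point $\bar u$, which is precisely what upgrades the pointwise estimate to a \emph{global} one. Beyond that, the argument is routine: the only genuinely substantive ingredient is the factorization $\mathcal{A}^{-1}=\mathbf{B}^{-1}\circ \mathbf{J}_{\mathcal{F}}$, after which global R-Lipschitz continuity is inherited from the composition of a (globally) $1$-Lipschitz single-valued resolvent with a globally R-Lipschitz set-valued matrix inverse.
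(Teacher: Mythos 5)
Your proof is correct and follows essentially the same route as the paper: both arguments use the monotonicity of $\mathcal{F}$ to show that $\Vert y-\bar y\Vert$ dominates $\Vert \mathbf{B}x-\mathbf{B}\bar x\Vert$ (your non-expansiveness of $\mathbf{J}_{\mathcal{F}}$ is exactly the paper's expansion of $\Vert \mathbf{B}x-\mathbf{B}\bar x+y_1-\bar y_1\Vert^2$ with the cross term discarded), and then both invoke Theorem \ref{R-Lipschitz} to pass from $\Vert \mathbf{B}x-\mathbf{B}\bar x\Vert$ to $\Vert x-\bar x\Vert$ with a constant depending only on $\mathbf{B}$. Your explicit factorization $\mathcal{A}^{-1}=\mathbf{B}^{-1}\circ\mathbf{J}_{\mathcal{F}}$ is a tidy repackaging of the same computation rather than a genuinely different argument.
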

\begin{proof}
Suppose that $y\in \mathcal{A(}x)=\mathbf{B}x+y_1, \;y_1\in \mathcal{F}(\mathbf{B}x)$ and $\bar y\in {\rm dom\,}(\mathcal{A}^{-1})$. We want to find $\bar x\in \mathcal{A}^{-1}(\bar y)$ such that 
$$
\Vert x-\bar x\Vert \le \kappa \Vert y-\bar y\Vert, 
$$
for some $\kappa>0$. First we take some $x'\in \mathcal{A}^{-1}(\bar y)$, i.e., $\bar y\in \mathcal{A}(x')=\mathbf{B}x'+\bar y_1, \;\bar y_1\in \mathcal{F}(\mathbf{B}x')$. Using the proof of Theorem \ref{R-Lipschitz}, there exists $\bar x$ such that $\mathbf{B}\bar x=\mathbf{B}x'$ and $\Vert \mathbf{B}x-\mathbf{B}\bar x\Vert \ge \kappa_1 \Vert x-\bar x \Vert$ for some $\kappa_1>0$.  Since $\mathcal{F}$ is monotone, one has 
$$
\Vert y-\bar y\Vert^2=\Vert \mathbf{B}x-\mathbf{B}\bar x+ y_1-\bar y_1\Vert^2 \ge  \Vert \mathbf{B}x-\mathbf{B}\bar x\Vert^2\ge \kappa_1^2\Vert x-\bar x \Vert^2
$$
and  the conclusion follows. 
\end{proof}
\begin{proposition}
If $\mathcal{A}:  \mathbb{X}\rightrightarrows \mathbb{Y}$ has the form $\mathcal{A}=\mathcal{B}+\mathcal{C}$ where  $\mathcal{B}: \mathbb{X} \rightrightarrows \mathbb{Y}$ is coercive and the pair $(\mathcal{B},\mathcal{C})$ is monotone then $\mathcal{A}^{-1}$ is globally R-Lipschitz continuous.   
\end{proposition}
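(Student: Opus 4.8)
\noindent The plan is to fix an arbitrary reference point $\bar y\in {\rm dom}\,\mathcal{A}^{-1}$ and to show that for every $y\in {\rm dom}\,\mathcal{A}^{-1}$ and every $x\in \mathcal{A}^{-1}(y)$ there is some $\bar x\in \mathcal{A}^{-1}(\bar y)$ with $\Vert x-\bar x\Vert\le \frac1\gamma\Vert y-\bar y\Vert$, where $\gamma>0$ is the coercivity modulus of $\mathcal{B}$. Since the constant $1/\gamma$ is uniform and no smallness of $\Vert y-\bar y\Vert$ is required, this is exactly global R-Lipschitz continuity of $\mathcal{A}^{-1}$ at $\bar y$ (with $\sigma=+\infty$), and $\bar y$ being arbitrary gives the claim.

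First I would pick, once and for all, some $x'\in \mathcal{A}^{-1}(\bar y)$, which exists because $\bar y\in {\rm dom}\,\mathcal{A}^{-1}$. As $\bar y\in \mathcal{A}(x')=\mathcal{B}(x')+\mathcal{C}(x')$, I fix a decomposition $\bar y=x_b'+x_c'$ with $x_b'\in \mathcal{B}(x')$ and $x_c'\in \mathcal{C}(x')$. For the given $x\in \mathcal{A}^{-1}(y)$ I likewise write $y=x_b+x_c$ with $x_b\in \mathcal{B}(x)$ and $x_c\in \mathcal{C}(x)$; such representatives exist since $y\in \mathcal{A}(x)$. Note that both $x$ and $x'$ belong to ${\rm dom}\,\mathcal{B}$, so the coercivity inequality (\ref{golf}) may be applied to the pair $x,x'$.

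The heart of the proof is a two-step estimate. Since $y-\bar y=(x_b-x_b')+(x_c-x_c')$, the monotonicity of the pair $(\mathcal{B},\mathcal{C})$---in the form $\Vert \mathcal{B}(x)-\mathcal{B}(x')+\mathcal{C}(x)-\mathcal{C}(x')\Vert\ge \Vert \mathcal{B}(x)-\mathcal{B}(x')\Vert$ recorded in the Remark---yields $\Vert y-\bar y\Vert\ge \Vert x_b-x_b'\Vert$. Coercivity of $\mathcal{B}$ then gives $\Vert x_b-x_b'\Vert\ge \gamma\Vert x-x'\Vert$. Chaining the two inequalities produces $\Vert x-x'\Vert\le \frac1\gamma\Vert y-\bar y\Vert$; taking $\bar x=x'$ and letting $x$ range over $\mathcal{A}^{-1}(y)$ gives $\mathbf{e}\mathbf{x}(\mathcal{A}^{-1}(y),\mathcal{A}^{-1}(\bar y))\le \frac1\gamma\Vert y-\bar y\Vert$, as desired.

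I do not expect a genuine obstacle: this is the coercive/monotone analogue of the matrix argument in Theorem \ref{R-Lipschitz}, but it is in fact simpler, because coercivity rules out the kernel ambiguity that forced the kernel-correction step there, so one may take $\bar x=x'$ outright rather than adjusting it. The only point demanding care is the bookkeeping with set-valued selections---ensuring that the chosen representatives $x_b,x_c,x_b',x_c'$ are precisely the ones inserted into both the monotonicity and the coercivity inequalities, and checking the domain memberships $x,x'\in {\rm dom}\,\mathcal{B}$ needed to invoke (\ref{golf}).
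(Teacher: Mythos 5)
Your proof is correct and follows essentially the same route as the paper: decompose $y$ and $\bar y$ into their $\mathcal{B}$- and $\mathcal{C}$-parts, use the pair monotonicity to bound $\Vert y-\bar y\Vert$ from below by $\Vert x_b-x_b'\Vert$, and then apply coercivity of $\mathcal{B}$ to get $\Vert y-\bar y\Vert\ge \gamma\Vert x-x'\Vert$. The only cosmetic difference is that the paper works with the squared-norm form of the monotonicity inequality and remarks that coercivity in fact forces $\mathcal{A}^{-1}$ to be single-valued Lipschitz, whereas you phrase the conclusion directly as an excess bound; both are fine.
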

\begin{proof}
Suppose that $\bar y=\bar y_1+\bar y_2\in \mathcal{A}(\bar x)$ where $\bar y_1\in \mathcal{B}(\bar x), \bar y_2\in \mathcal{C}(\bar x)$ and $y=y_1+y_2\in \mathcal{A}(x)$ where $y_1\in \mathcal{B}(x), y_2\in \mathcal{C}(x)$. Since $\mathcal{B}: \mathbb{X} \to \mathbb{Y}$ is coercive and the pair $(\mathcal{B},\mathcal{C})$ is monotone, one has
\baqn
\Vert \bar y-y\Vert^2&=&\Vert \bar y_1-y_1+\bar y_2-y_2\Vert^2\\
&\ge& \Vert \bar y_1-y_1\Vert^2\\
&\ge& \kappa^2 \Vert \bar x-x\Vert^2,
\eaqn
for some $\kappa>0$. Thus  $\mathcal{A}^{-1}$ is single-valued Lipschitz continuous and thus is R-Lipschitz continuous. 
\end{proof}

\section{Explicit Convergence Rate of $\mathbf{P}\mathbf{P}\mathbf{A}$ under the  R-Continuity} \label{s4}
Let $\mathcal{A}: \mathcal{H} \rightrightarrows \mathcal{H}$ be a maximally monotone operator with nonempty $\mathbf{S}:=\mathcal{A}^{-1}(0)$ where $\mathcal{H}$ is a Hilbert space. 
{Let us recall that  $\mathbf{P}\mathbf{P}\mathbf{A}$  generates for any starting point  $x_0$,  a sequence $(x_n)$ defined by the rule:
\beq
x_{n+1}=\textbf{J}_{\gamma \mathcal{A}}(x_n), \;x_0 \in \mathcal{H},
\eeq
for some $\gamma>0$ where $\textbf{J}_{\gamma \mathcal{A}}(x):\,=(\Id+\gamma \mathcal{A})^{-1}(x)$.}
$\mathbf{P}\mathbf{P}\mathbf{A}$ plays an important role  in convex optimization if $\mathcal{A}$ is set-valued and has no special structure. The following result shows that the convergence rate of $\mathbf{P}\mathbf{P}\mathbf{A}$ is indeed linear if  $\mathcal{A}^{-1}$ is R-Lipschitz continuous at zero globally.
\begin{theorem}\label{general}
 Suppose that  $\mathcal{A}^{-1}$ is R-Lipschitz continuous at zero globally with modulus function $\rho(r)=Lr$ for some $L>0$. {Then} for $n\ge 1$, the {sequence $(\mathbf{d}(x_n,\mathbf{S}))_{n\in \N}$ }converges to zero with linear rate where $(x_n)$ is the sequence generated by   $\mathbf{P}\mathbf{P}\mathbf{A}$ with $\gamma>2L$. 
\end{theorem}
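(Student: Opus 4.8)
The plan is to convert a single proximal step into a membership in the graph of $\mathcal{A}^{-1}$ and then feed it into the hypothesis. The identity $x_{n+1}=\mathbf{J}_{\gamma\mathcal{A}}(x_n)=(\Id+\gamma\mathcal{A})^{-1}(x_n)$ is equivalent to $x_n\in x_{n+1}+\gamma\mathcal{A}(x_{n+1})$, so that, on setting
$$
v_n:=\frac{x_n-x_{n+1}}{\gamma},
$$
one has $v_n\in\mathcal{A}(x_{n+1})$, i.e. $x_{n+1}\in\mathcal{A}^{-1}(v_n)$. Since the reference set is precisely $\mathbf{S}=\mathcal{A}^{-1}(0)$, the assumed continuity of $\mathcal{A}^{-1}$ at zero is tailor-made to control $\mathbf{d}(x_{n+1},\mathbf{S})$.

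First I would invoke the global R-Lipschitz continuity of $\mathcal{A}^{-1}$ at zero. Because $\sigma=+\infty$, the defining excess estimate applies to $v_n$ no matter how large its norm is, giving $\mathbf{e}\mathbf{x}(\mathcal{A}^{-1}(v_n),\mathcal{A}^{-1}(0))\le L\Vert v_n\Vert$. As $x_{n+1}\in\mathcal{A}^{-1}(v_n)$, this yields
$$
\mathbf{d}(x_{n+1},\mathbf{S})\le L\Vert v_n\Vert=\frac{L}{\gamma}\,\Vert x_n-x_{n+1}\Vert.
$$
Second I would bound the step length $\Vert x_n-x_{n+1}\Vert$ by $\mathbf{d}(x_n,\mathbf{S})$. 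The key observation is that every $\bar x\in\mathbf{S}$ is a fixed point of the resolvent, since $0\in\mathcal{A}(\bar x)$ is equivalent to $\bar x=\mathbf{J}_{\gamma\mathcal{A}}(\bar x)$. Non-expansiveness of $\mathbf{J}_{\gamma\mathcal{A}}$ then gives $\Vert x_{n+1}-\bar x\Vert=\Vert\mathbf{J}_{\gamma\mathcal{A}}(x_n)-\mathbf{J}_{\gamma\mathcal{A}}(\bar x)\Vert\le\Vert x_n-\bar x\Vert$, whence by the triangle inequality $\Vert x_n-x_{n+1}\Vert\le\Vert x_n-\bar x\Vert+\Vert x_{n+1}-\bar x\Vert\le 2\Vert x_n-\bar x\Vert$. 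Taking the infimum over $\bar x\in\mathbf{S}$ (the left-hand side being independent of $\bar x$) produces $\Vert x_n-x_{n+1}\Vert\le 2\,\mathbf{d}(x_n,\mathbf{S})$.

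Combining the two estimates gives the recursion $\mathbf{d}(x_{n+1},\mathbf{S})\le\frac{2L}{\gamma}\,\mathbf{d}(x_n,\mathbf{S})$, and the standing assumption $\gamma>2L$ makes the factor $q:=2L/\gamma<1$, so $\mathbf{d}(x_n,\mathbf{S})\le q^{n}\,\mathbf{d}(x_0,\mathbf{S})\to 0$ with the claimed linear rate. The main obstacle — and the step that fixes the precise threshold — is the passage from the per-step distance bound to an honest contraction: the crude estimate $\Vert x_n-x_{n+1}\Vert\le 2\,\mathbf{d}(x_n,\mathbf{S})$ loses a factor of $2$, which is exactly what forces $\gamma>2L$ (a sharper firm-nonexpansiveness argument would sharpen the step bound to $\Vert x_n-x_{n+1}\Vert\le\mathbf{d}(x_n,\mathbf{S})$ and relax the requirement, but the stated threshold reflects the triangle-inequality route). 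One should also note that the nonemptiness of $\mathbf{S}$, part of the hypotheses, keeps all the distances finite.
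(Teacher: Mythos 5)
Your proposal is correct and follows essentially the same route as the paper: read the proximal step as $x_{n+1}\in\mathcal{A}^{-1}\bigl(\tfrac{x_n-x_{n+1}}{\gamma}\bigr)$, apply the global R-Lipschitz estimate to get $\mathbf{d}(x_{n+1},\mathbf{S})\le\tfrac{L}{\gamma}\Vert x_{n+1}-x_n\Vert$, and bound the step by $2\,\mathbf{d}(x_n,\mathbf{S})$ via nonexpansiveness of the resolvent and the triangle inequality. The only cosmetic difference is that the paper works with the projection $\bar x_n=\proj_{\mathbf{S}}(x_n)$ while you take an infimum over $\bar x\in\mathbf{S}$; both yield the same contraction factor $\kappa=2L/\gamma<1$.
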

\begin{proof}
Let $\bar x_n=\proj _\mathbf{S}(x_n)$. We have  $ \Vert x_{n+1}-\bar x_n\Vert \le \Vert x_{n}-\bar x_n\Vert = \mathbf{d}(x_{n},\mathbf{S})$ where the  inequality is obtained by using the nonexpansiveness of the resolvent.   We have 
\baq
-\frac{x_{n+1}-x_n}{\gamma}\in   \mathcal{A}(x_{n+1}).
\eaq
Since $\mathcal{A}^{-1}$ is R-Lipschitz continuous at zero globally, we obtain
$$
x_{n+1}=\mathcal{A}^{-1}\Big(-\frac{x_{n+1}-x_n}{\gamma}\Big)\subset  \mathcal{A}^{-1}(0)+\rho\Big(\frac{\Vert x_{n+1}-x_n\Vert}{\gamma}\Big)\ball=\mathbf{S}+\rho\Big(\frac{\Vert x_{n+1}-x_n\Vert}{\gamma}\Big)\ball.
$$
Consequently 
$$
\mathbf{d}(x_{n+1},\mathbf{S}) \le \frac{L\Vert x_{n+1}-x_n\Vert}{\gamma}\le  \frac{L}{\gamma}\Big(\Vert x_{n+1}-\bar x_n\Vert+\Vert x_{n}-\bar x_n\Vert\Big)\le  \frac{2L}{\gamma}\Vert x_{n}-\bar x_n\Vert=\kappa  \mathbf{d}(x_{n},\mathbf{S})
$$
where $\kappa=\frac{2L}{\gamma}<1$ by choosing $\gamma>2L$ .
\end{proof}
Next we consider the convergence rate of $\mathbf{P}\mathbf{P}\mathbf{A}$ under only the R-continuity of $\mathcal{A}^{-1}$. The following result is similar to \cite[Lemma 3.1]{aal}. Here we give a proof. 
\begin{lemma}\label{fzero}
Let $f: [t_0, \infty)\to \R^+$ be a nonincreasing function which is locally absolutely continuous and belongs to $\mathbf{L}^1(t_0, \infty)$. Then $\lim_{t\to \infty} tf(t)=0.$
\end{lemma}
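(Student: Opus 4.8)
The plan is to exploit monotonicity to turn the integrability hypothesis into a pointwise decay estimate; in fact the local absolute continuity will play no essential role and only nonnegativity, monotonicity, and membership in $\mathbf{L}^1$ will be used. The starting observation is that since $f\in\mathbf{L}^1(t_0,\infty)$ and $f\ge 0$, the tail integral $\int_t^\infty f(s)\,ds$ is finite for every $t\ge t_0$ and tends to $0$ as $t\to\infty$ (this is just the Cauchy criterion for the convergence of $\int_{t_0}^\infty f$).

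The key step is an elementary two-sided bound on the integral over a window whose length grows linearly with $t$. Fix $t\ge 2t_0$, so that $[t/2,t]\subseteq[t_0,\infty)$. Because $f$ is nonincreasing, $f(s)\ge f(t)$ for every $s\in[t/2,t]$, whence
\[
\int_{t/2}^t f(s)\,ds \;\ge\; \frac{t}{2}\,f(t).
\]
On the other hand, since $f\ge 0$, the same integral is dominated by the tail $\int_{t/2}^\infty f(s)\,ds$, which converges to $0$ as $t\to\infty$ by the first step. Combining the two estimates yields
\[
0\;\le\;\frac{t}{2}\,f(t)\;\le\;\int_{t/2}^\infty f(s)\,ds\;\xrightarrow[t\to\infty]{}\;0,
\]
and therefore $tf(t)\to 0$, which is the desired conclusion.

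I expect no genuine obstacle in this argument: the only point requiring a little care is the choice of the splitting location, here $t/2$, which must tend to infinity with $t$ while keeping the length of the window $[t/2,t]$ proportional to $t$; this is precisely what produces the factor $t$ in front of $f(t)$. (Any fixed fraction $\theta t$ with $0<\theta<1$ would work equally well, changing only the constant.) I will note in passing that the hypothesis of local absolute continuity, although included in the statement — presumably to match the setting of the cited reference \cite{aal}, where $f$ arises as a differentiable energy — is not needed for this proof. Should one prefer an argument that genuinely invokes $f'$, one could integrate by parts and use $f'\le 0$ a.e., but the monotonicity route above is shorter and more robust.
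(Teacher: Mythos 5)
Your proof is correct. It is, however, a genuinely different argument from the one in the paper. The paper introduces the auxiliary function $F(t)=tf(t)$, uses the local absolute continuity and the monotonicity (via $f'\le 0$ a.e.) to get the differential inequality $F'(t)\le f(t)$, invokes an external result (\cite[Lemma 5.1]{aas}) to conclude that $\lim_{t\to\infty}F(t)$ exists, and then rules out a positive limit by noting that $f(t)\ge c_1 t^{-1}$ would contradict integrability. You instead use only monotonicity and nonnegativity to sandwich $\tfrac{t}{2}f(t)$ between $0$ and the tail integral $\int_{t/2}^{\infty}f(s)\,ds$, which vanishes as $t\to\infty$. Your route is shorter, self-contained, and makes explicit that the local absolute continuity hypothesis is not needed for the conclusion --- it is carried along only to match the setting of \cite{aal}. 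What the paper's approach buys is a template that survives when pointwise monotonicity of $f$ is unavailable but a differential inequality for a Lyapunov-type quantity is, which is the typical situation in the continuous-time dynamics literature the lemma is borrowed from; your approach buys elementarity and weaker hypotheses. One cosmetic point: the normalization $t\ge 2t_0$ presumes $t_0\ge 0$ (so that $[t/2,t]\subseteq[t_0,\infty)$); for general $t_0$ you should simply say ``for all $t$ sufficiently large,'' exactly as the paper's own reduction to $t_0\ge 0$ does.
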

\begin{proof}
Without loss of generality, suppose that $t_0\ge 0$ since the limit involves only for $t$ large. Let $F(t)=tf(t)$ then $F$ is locally absolutely continuous, bounded from below  by $0$ and for almost all $t\ge t_0$, we have 
$$
\frac{d}{dt}(F(t))=f(t)+tf'(t)\le f(t).
$$
Since $f\in \mathbf{L}^1(t_0, \infty)$, using \cite[Lemma 5.1]{aas}, we deduce that $\lim_{t\to\infty}F(t)$ exists. Suppose that $\lim_{t\to\infty}F(t)=c>0$ then there exists $T>0, c_1>0$ such that for all $t\ge T$ one has $F(t)\ge c_1$, or equivalently $f(t)\ge c_1 t^{-1}$. Then 
$$
\int_T^\infty f(t)dt\ge c_1\int_T^\infty  t^{-1}dt=\infty,
$$
a contradiction. Thus we must have  $\lim_{t\to\infty}F(t)=0$ or equivalently $\lim_{t\to\infty}tf(t)=0$.
\end{proof}
\begin{lemma}\label{ratea}
Let $(a_n)$ be a nonincreasing  nonnegative sequence such that $\sum_{n=1}^\infty a_n < \infty$. Then we have $\lim_{n\to \infty}na_n=0$.
\end{lemma}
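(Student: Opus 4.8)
The plan is to reproduce, in the discrete setting, the same monotonicity-plus-summability mechanism behind Lemma \ref{fzero}, but it is cleaner to argue directly via the Cauchy criterion for the convergent series rather than to reduce to the continuous statement through a step-function construction. The guiding idea is that, because $(a_n)$ is nonincreasing, a block of roughly $n/2$ consecutive terms ending at $a_n$ is bounded below by $n/2$ copies of the smallest term $a_n$; since the series converges, such a block (whose two endpoints both tend to infinity) has sum tending to zero, and this forces $n a_n \to 0$.

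Concretely, first I would fix $n \ge 2$ and set $m = \lfloor n/2 \rfloor$, so that $n - m = \lceil n/2 \rceil \ge n/2$. Using that $a_k \ge a_n$ for every index $k$ with $m+1 \le k \le n$ (this is where monotonicity enters), together with nonnegativity of all the terms, I would record the key inequality
$$
0 \le \frac{n}{2}\, a_n \le (n-m)\, a_n \le \sum_{k=m+1}^{n} a_k .
$$

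Next I would invoke the convergence of $\sum_{n=1}^\infty a_n$. Writing $S_N = \sum_{k=1}^N a_k$ for the partial sums, the right-hand side above is exactly $S_n - S_m$, and the sequence $(S_N)$ converges, hence is Cauchy. Since $m = \lfloor n/2 \rfloor \to \infty$ and $n \to \infty$ as $n \to \infty$, we get $S_n - S_m \to 0$. Combined with the displayed chain of inequalities and the squeeze theorem, this yields $\tfrac{n}{2}\, a_n \to 0$, that is, $\lim_{n \to \infty} n a_n = 0$, as claimed.

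The only point requiring a little care — and the main, essentially minor, obstacle — is making the ``tail of a convergent series'' step rigorous: one must verify that $S_n - S_{\lfloor n/2 \rfloor} \to 0$. This follows from the Cauchy property of $(S_N)$ once one observes $\lfloor n/2 \rfloor \to \infty$; explicitly, given $\varepsilon > 0$ one chooses $N$ with $\sum_{k > N} a_k < \varepsilon$, and then for all $n \ge 2N$ one has $m = \lfloor n/2 \rfloor \ge N$, whence $S_n - S_m = \sum_{k=m+1}^n a_k \le \sum_{k > N} a_k < \varepsilon$. No further estimates are needed, and the monotonicity hypothesis is used only through the single pointwise bound $a_k \ge a_n$ on the block.
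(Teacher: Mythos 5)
Your proof is correct, but it takes a genuinely different route from the paper. The paper proves this lemma by reduction to the continuous-time statement (Lemma \ref{fzero}): it builds the piecewise-linear interpolant $f(t)=a_n+(a_{n+1}-a_n)(t-n)$ on $[n,n+1]$, checks that $f$ is nonincreasing, locally absolutely continuous and integrable (with $\int_1^\infty f = \sum_n (a_n+a_{n+1})/2$), applies Lemma \ref{fzero} to get $tf(t)\to 0$, and then uses $tf(t)\ge (n-1)a_n$ for $t\in[n-1,n)$ to conclude. Your argument is the classical direct one: with $m=\lfloor n/2\rfloor$, monotonicity gives $0\le \tfrac{n}{2}a_n\le (n-m)a_n\le S_n-S_m$, and the Cauchy property of the convergent partial sums forces the right-hand side to zero. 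Your route is more elementary and self-contained --- it bypasses Lemma \ref{fzero} entirely, and hence also the cited Lemma 5.1 of \cite{aas} on which that lemma's proof relies --- while the paper's route has the advantage of exhibiting the discrete lemma as a direct corollary of the continuous-time machinery it has already set up, at the cost of the interpolation bookkeeping. Both uses of monotonicity are the same in spirit (a block of terms dominates a multiple of its last term, respectively $f(t)\ge a_n$ on $[n-1,n)$), and your Cauchy-tail verification at the end is exactly the point that needs care; you have handled it correctly.
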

\begin{proof}
Since $\sum_{n=1}^\infty a_n < \infty$, we have $\sum_{n=1}^\infty a_{n+1} < \infty$ and thus $\sum_{n=1}^\infty b_n < \infty$ where $b_n:=(a_n+a_{n+1})/2$. Let $f:[1, \infty)\to \R^+$ be defined as follows: if $t\in [n, n+1]$ then $f(t)=a_n+(a_{n+1}-a_n)(t-n)$, $n\ge 1$. Then $f$ is a nonincreasing function which is locally absolutely continuous and belongs to $ \mathbf{L}^1(0, \infty)$ since 
$$
\int_1^\infty f(t)dt=\sum_{n=1}^\infty b_n < \infty.
$$
Note that if $t\in [n-1, n)$ then $f(t)\ge a_n$ and $tf(t)\ge (n-1)a_n$. Using Lemma \ref{fzero}, we have $\lim_{t\to\infty}tf(t)=0$ and thus
$$\lim_{n\to \infty}(n-1)a_n=0.$$
Therefore $\lim_{n\to \infty}na_n=0$ since  $\lim_{n\to \infty}a_n=0$.
\end{proof}
\begin{remark}
The  {fact that the sequence $(a_n)$  is nonincreasing}  cannot be {omitted}. For example let $n_k=k^2, k=1, 2, 3\ldots$ and $x_n=0$ if $n\neq n_k$  and $x_n=\frac{1}{n}$ if $n=n_k$. Then 
$$
\sum_{n=1}^\infty x_n =\sum_{k=1}^\infty  \frac{1}{k^2}<\infty.
$$
However $nx_n=0$ if $n\neq n_k$  and $nx_n={1}$ if $n=n_k$. Thus the sequence $(nx_n)$ is not convergent. 
\end{remark}
\begin{theorem}\label{rate}
Suppose that $f: \mathcal{H} \to \R \cup \{+\infty\}$ is a proper lower semicontinuous convex function {such that } ${\rm inf}f>-\infty$ and $(\partial f)^{-1}$ is R-continuous at zero with modulus function $\rho$ and radius $\sigma$. Let $(x_n)$ be the sequence generated by the proximal point algorithm
\beq
x_0\in \mathcal{H},\; x_{n+1}={\rm \textbf{J}}_{\gamma\partial f}(x_n), \;{\rm for\;some\; }\gamma >0.
\eeq
  We have \\
a) $\Vert x_{n+1} -x_{n}  \Vert^2=o(\frac{1}{n}).$\\
b)  Let $n_0$ be such that $\frac{\Vert x_{n_0+1} -x_{n_0}  \Vert}{\gamma}\le \sigma$. Then for all $n\ge n_0$, we have 
$$
\mathbf{d}(x_n,\mathbf{S})\le\rho \Big(o\Big(\frac{1}{\sqrt{n}}\Big)\Big)\to 0
$$
and 
$$
f(x_{n}) -f^*\le \mathbf{d}(x_n,\mathbf{S}) o(\frac{1}{\sqrt{n}})\le\rho\Big(o\Big(\frac{1}{\sqrt{n}}\Big)\Big) o\Big(\frac{1}{\sqrt{n}}\Big)\to 0
$$
where $f^*=\min_{x\in \mathcal{H}} f(x)$.

\noindent c) If $\rho(r)=Lr$ for some $L>0$ and $\gamma>2L$  then for $n\ge n_0$, the distance $\mathbf{d}(x_n,\mathbf{S})$ converges to zero with linear rate and so is $f(x_{n}) -f^*$. 
\end{theorem}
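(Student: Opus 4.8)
The plan is to combine the variational structure of the proximal iteration with the two summability lemmas (Lemma~\ref{fzero} and Lemma~\ref{ratea}) and the R-continuity of $(\partial f)^{-1}$. First I would record the optimality condition for the resolvent: since $x_{n+1}=\textbf{J}_{\gamma\partial f}(x_n)$ minimizes $y\mapsto f(y)+\frac{1}{2\gamma}\Vert y-x_n\Vert^2$, one has $-\frac{x_{n+1}-x_n}{\gamma}\in\partial f(x_{n+1})$, equivalently $x_{n+1}\in(\partial f)^{-1}\big(-\frac{x_{n+1}-x_n}{\gamma}\big)$. Comparing the objective at $x_{n+1}$ and at $x_n$ yields the descent inequality $f(x_{n+1})+\frac{1}{2\gamma}\Vert x_{n+1}-x_n\Vert^2\le f(x_n)$; telescoping and using $\inf f=f^*>-\infty$ gives $\sum_{n}\Vert x_{n+1}-x_n\Vert^2\le 2\gamma(f(x_0)-f^*)<\infty$. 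Because the resolvent is (firmly) nonexpansive, $\Vert x_{n+1}-x_n\Vert=\Vert\textbf{J}_{\gamma\partial f}(x_n)-\textbf{J}_{\gamma\partial f}(x_{n-1})\Vert\le\Vert x_n-x_{n-1}\Vert$, so the increments are nonincreasing. For part (a) I would then apply Lemma~\ref{ratea} to $a_n:=\Vert x_{n+1}-x_n\Vert^2$, which is nonnegative, nonincreasing and summable; the lemma gives $n\,a_n\to 0$, i.e. $\Vert x_{n+1}-x_n\Vert^2=o(1/n)$, and in particular $\Vert x_n-x_{n-1}\Vert=o(1/\sqrt n)$.

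For part (b), monotonicity of the increments shows that once $\Vert x_{n_0+1}-x_{n_0}\Vert/\gamma\le\sigma$ the bound persists for every $n\ge n_0$, so R-continuity of $(\partial f)^{-1}$ at zero applies to $-\frac{x_n-x_{n-1}}{\gamma}$: from $x_n\in(\partial f)^{-1}\big(-\frac{x_n-x_{n-1}}{\gamma}\big)\subset\mathbf{S}+\rho\big(\frac{\Vert x_n-x_{n-1}\Vert}{\gamma}\big)\ball$ I obtain $\mathbf{d}(x_n,\mathbf{S})\le\rho\big(\Vert x_n-x_{n-1}\Vert/\gamma\big)$, and since $\rho$ is nondecreasing with $\rho(0^+)=0$ and the argument is $o(1/\sqrt n)$, this gives $\mathbf{d}(x_n,\mathbf{S})\le\rho(o(1/\sqrt n))\to 0$.

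For the objective gap I would use the subgradient inequality at $x_n$ with the subgradient $-\frac{x_n-x_{n-1}}{\gamma}$ and the test point $\bar x_n:=\proj_{\mathbf{S}}(x_n)$, for which $f(\bar x_n)=f^*$: convexity gives $f(x_n)-f^*\le\dotp{\frac{x_n-x_{n-1}}{\gamma}}{x_n-\bar x_n}$, and Cauchy--Schwarz bounds the right-hand side by $\frac{\Vert x_n-x_{n-1}\Vert}{\gamma}\,\mathbf{d}(x_n,\mathbf{S})=o(1/\sqrt n)\,\mathbf{d}(x_n,\mathbf{S})$; substituting the distance estimate yields the stated $\rho(o(1/\sqrt n))\,o(1/\sqrt n)$ bound.

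For part (c) I would specialize to $\rho(r)=Lr$ and reproduce the contraction argument of Theorem~\ref{general}. Writing $\bar x_{n-1}=\proj_{\mathbf{S}}(x_{n-1})$, nonexpansiveness together with the fact that points of $\mathbf{S}$ are fixed points of the resolvent gives $\Vert x_n-\bar x_{n-1}\Vert\le\mathbf{d}(x_{n-1},\mathbf{S})$, whence $\Vert x_n-x_{n-1}\Vert\le 2\,\mathbf{d}(x_{n-1},\mathbf{S})$; combined with $\mathbf{d}(x_n,\mathbf{S})\le\frac{L}{\gamma}\Vert x_n-x_{n-1}\Vert$ this gives $\mathbf{d}(x_n,\mathbf{S})\le\frac{2L}{\gamma}\mathbf{d}(x_{n-1},\mathbf{S})$ with $\kappa=\frac{2L}{\gamma}<1$, a linear rate; the estimate of (b) then controls $f(x_n)-f^*$ by a product of two linearly vanishing terms, so it too decays linearly. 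The only genuinely delicate point is the input to Lemma~\ref{ratea} in part (a): the lemma fails without monotonicity, as the remark following Lemma~\ref{ratea} illustrates, so the crux is to observe that firm nonexpansiveness of the resolvent makes $\Vert x_{n+1}-x_n\Vert$ nonincreasing; everything else is bookkeeping with the $o(\cdot)$ notation and with the modulus $\rho$.
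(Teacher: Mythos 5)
Your proposal is correct and follows essentially the same route as the paper: summability of $\Vert x_{n+1}-x_n\Vert^2$ from convexity (the paper uses the subgradient inequality directly, yielding the slightly sharper constant $\gamma(f(x_0)-\inf f)$ instead of $2\gamma(f(x_0)-f^*)$, but this is immaterial), monotonicity of the increments via nonexpansiveness feeding Lemma~\ref{ratea}, R-continuity of $(\partial f)^{-1}$ applied to $-\frac{x_{n+1}-x_n}{\gamma}$ for the distance estimate, the subgradient inequality at the projection onto $\mathbf{S}$ for the objective gap, and the same $\frac{2L}{\gamma}$-contraction for part (c). Your explicit remark that monotonicity of the increments keeps the argument of $\rho$ within the radius $\sigma$ for all $n\ge n_0$ is a small but welcome clarification that the paper leaves implicit.
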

\begin{proof}
a) From 
\baq\label{ppa}
-\frac{x_{n+1}-x_n}{\gamma}\in  \partial f(x_{n+1})
\eaq
and the fact that $f$ is convex, we imply that 
$$
\left\langle -\frac{x_{n+1}-x_n}{\gamma}, x_n -x_{n+1} \right\rangle \le f(x_n)-f(x_{n+1}).
$$
Hence 
$$
\sum_{n=1}^\infty \Vert x_{n+1} -x_{n}  \Vert^2\le \gamma(f(x_0)-{\rm inf}f)<\infty
$$
and thus $\Vert x_{n+1} -x_{n}  \Vert$ converges to zero. Since $(\Vert x_{n+1} -x_{n}  \Vert)_n$ is nonincreasing due to the nonexpansiveness of the resolvent, using Lemma \ref{ratea}, we have
$$
\Vert x_{n+1} -x_{n}  \Vert^2=o(\frac{1}{n}).
$$
b) From (\ref{ppa}) and the  R-continuity of $(\partial f)^{-1}$, for $n\ge n_0$ , we obtain 
$$
x_{n+1}=(\partial f)^{-1} \left(-\frac{x_{n+1}-x_n}{\gamma} \right)\subset (\partial f)^{-1}(0)+\rho \left(\frac{\Vert x_{n+1}-x_n\Vert}{\gamma} \right)\ball=\mathbf{S}+\rho\left(\frac{\Vert x_{n+1}-x_n\Vert}{\gamma} \right)\ball.
$$
Therefore 
\beq\label{estid}
\mathbf{d}(x_{n+1},\mathbf{S}) \le \rho\Big(\frac{\Vert x_{n+1}-x_n\Vert}{\gamma}\Big)=\rho\Big(o(\frac{1}{\sqrt{n}})\Big)\to 0,\;\;{\rm as}\;\; n \to \infty. 
\eeq
Let $x_{n+1}^*=\proj _\mathbf{S}(x_{n+1})$ then $f(x_{n+1}^*)=f^*$. Using  (\ref{ppa}), one has 
$$
\left \langle -\frac{x_{n+1}-x_n}{\gamma}, x_{n+1}^* -x_{n+1}\right  \rangle \le f(\bar x)-f(x_{n+1}).
$$
which implies that 
$$ f(x_{n+1}) -f^* \le \left\Vert \frac{x_{n+1}-x_n}{\gamma} \right\Vert \mathbf{d}(x_{n+1},\mathbf{S}) \le\rho\Big(o\Big(\frac{1}{\sqrt{n}}\Big)\Big) o\Big(\frac{1}{\sqrt{n}}\Big)\to 0.$$
c)  Let $\bar x_n=\proj _\mathbf{S}(x_n)$. We know that  $ \Vert x_{n+1}-\bar x_n\Vert \le \Vert x_{n}-\bar x_n\Vert = \mathbf{d}(x_{n},\mathbf{S})$ .  From (\ref{estid}), for $n\ge n_0$, we have  
$$
\mathbf{d}(x_{n+1},\mathbf{S}) \le \frac  {L\ {}\Vert x_{n+1}-x_n\Vert}{\gamma} \le \frac{L}{\gamma} \Big(\Vert x_{n+1}-\bar x_n\Vert+\Vert x_{n}-\bar x_n\Vert \Big)\le  \frac{2L}{\gamma}\Vert x_{n}-\bar x_n\Vert=\kappa \mathbf{d}(x_{n},\mathbf{S})
$$
where $\kappa=\frac{2L}{\gamma}<1$ by choosing $\gamma>2L$ . \\
\end{proof}
\begin{remark}
R-continuity  {ensures}  $\mathbf{P}\mathbf{P}\mathbf{A}$ (and also $\mathbf{D}\mathbf{C}\mathbf{A}$ \cite{L1}) {to be} consistent in the numerical sense, i.e., if $\Vert x_{n+1}-x_{n}\Vert$ is small then $x_n$ is close to a solution. It provides an estimation of the distance between $x_n$ and the solution set based on the convergence rate of $\Vert x_{n+1}-x_{n}\Vert$ to zero. 
\end{remark}

 \section{Conclusions} \label{s5}
\noindent In this paper, we highlighted several  advantages of R-continuity compared to other key  tools used  in optimization,  such as metric regularity, metric subregularity and calmness. We explored important properties of R-continuity  and derived   an explicit convergence rate for the Proximal Point Algorithm ($\mathbf{P}\mathbf{P}\mathbf{A}$) {under this  framework. We believe that the technique developed in the paper can be extended to gain further insights into   the convergence rate of other   optimization algorithms.

\section{Acknowledgements}
This research benefited from the support of the FMJH Program Gaspard Monge for optimization and operations research and their interactions with data science.
\bibliography{proximal.bib}
\bibliographystyle{plain}

\end{document}